\documentclass[reqno,b5paper]{amsart}
\usepackage{amssymb}
\usepackage{dsfont}
\usepackage{color}

\newtheorem{theorem}{Theorem}[section]
\newtheorem{problem}[theorem]{Problem}
\newtheorem{proposition}[theorem]{Proposition}

\newtheorem{lemma}[theorem]{Lemma}

\newtheorem{remark}[theorem]{Remark}
\newtheorem{definition}[theorem]{Definition}
\newtheorem{fact}[theorem]{Fact}

\newcommand{\T}{\mathbb{T}}
\newcommand{\Z}{\mathbb{Z}}
\newcommand{\N}{\mathbb{N}}

\def\cont{\mathfrak c}

\newcommand{\supp}{\mathrm{supp}}

\catcode`\@=12
\def\T{{\mathbb T}}
\def\eps{{\varepsilon}}

\def\Z{{\mathbb Z}}
\def\N{{\mathbb N}}
\def\R{{\mathbb R}}
\def\Q{{\mathbb Q}}
\def\P{{\mathbb P}}

\def\cont{\mathfrak c}
\def\c{\cont}

\makeindex

\allowdisplaybreaks
\begin{document}
\title[$s$-characterized subgroups]{Statistically characterized subgroups related to arithmetic-type sequence of integers}
\subjclass[2010]{Primary: 22B05, Secondary: 11B05, 40A05} \keywords{Circle group, characterized subgroup, natural density, s-characterized subgroup, arithmetic-type sequence, lifting function}

\author{Pratulananda Das}
\address{Department of Mathematics, Jadavpur University, Kolkata-700032, India}
\email{pratulananda@yahoo.co.in}

\author{Ayan Ghosh}
\address{School of Mathematical and Computational Sciences, Indian Association for the Cultivation of Science, Kolkata-700032, India}
\email {ayanghosh.jumath@gmail.com}

\author{Tamim Aziz}
\address{Department of Mathematics, University of North Bengal, Darjeeling-734013, India}
\email {tamimaziz99@gmail.com}

\begin{abstract}
Very recently, in [Das et al., J. Lond. Math. Soc., 2025], statistically characterized subgroups were studied for certain classes of non-arithmetic sequences. Subsequently, in [Das et al., Bull. Sci. Math., 2025], characterized subgroups were investigated for a class of arithmetic-type sequences that includes both arithmetic sequences and certain non-arithmetic sequences. Motivated by these developments, we study statistically characterized subgroups associated with a broader class of arithmetic-type sequences. In particular, all previously obtained cardinality related observations for statistically characterized subgroups corresponding to arithmetic sequences as well as certain non-arithmetic sequences follow as special cases of our results. Moreover, we show that this broader class exhibits drastically different behavior and differs significantly from the previously studied special cases.
\end{abstract}
\maketitle

\section{Introduction and background}
The concept of characterized subgroups has evolved significantly over the years, broadening its scope beyond its original foundations to serve as a generalization of the torsion subgroup. Its rich history is closely tied to the study of sequences of multiples of a real number mod 1. The classical case, where these sequences are uniformly distributed mod 1, was first explored in the seminal work of Herman Weyl \cite{W}, and has since inspired further developments (see \cite{BDBW, BDMW1, DPS, DGDis} for more details and recent advancements). Moreover, these sequences of multiples have a deep connection with Ergodic theory, as can be seen from the study of Sturmian sequences and Hartman sets \cite{Wi}. The concept also plays a pivotal role in the structure theory of locally compact abelian groups \cite{AT,AD,R}. Furthermore, there is a profound relationship between characterized subgroups and ``trigonometric thin sets", particularly Arbault sets \cite{A1}, which are significant in Harmonic Analysis (see \cite{BuKR, DG, El, Ka} for detailed investigations in these directions).

Before proceeding further, let us formally present the definition of a {\em characterized subgroup} of the circle group $\T$.

\begin{definition}
Let $(a_n)$ be a sequence of integers, the subgroup
$$
t_{(a_n)}(\T) := \{x\in \T: a_nx \to 0\mbox{ in } \T\}
$$
of $\T$ is called {\em a characterized} $($by $(a_n))$ {\em subgroup} of $\T$.
\end{definition}

The term {\em characterized} appeared much later, coined in \cite{BDS}. Further, it is important to note that, for practical purposes, it is sufficient to work with sequences of positive integers only, which has been the usual practice. Although these subgroups can be generated by any sequence of positive integers, one major theme over the years has been the study of these subgroups when they are generated by arithmetic sequences. Precisely, a sequence of positive integers $(a_n)$ is an
{\em arithmetic sequence} if
$$1 = a_0 < a_1 < a_2 < \dots < a_n < \dots ~~\mbox{and}~a_n|a_{n+1}~ \mbox{for every}~n \in \mathbb{N}.$$
For an arithmetic sequence of integers $(a_n)$, the sequence of ratios $(b_n)$ is defined as
$$
b_1=a_1 \mbox{     \ and \      } b_n=\frac{a_n}{a_{n-1}}  \mbox{ for  } n \geq 2.
$$

In 2020, the notion of characterized subgroups was generalized using a more general mode of convergence and here the idea of natural density (see \cite{Bu1}) and the corresponding mode of convergence came into the picture. In the language of \cite{DDB}, ``Although the correspondence $(a_n) \to t_{(a_n)}(\T)$ is decreasing (with respect to inclusion), in many cases the subgroup $t_{(a_n)}(\T)$ is rather small, even if the sequence $(a_n)$ is not too dense". So it seemed a natural course of action to involve a more general mode of convergence (for more details of the reasons and motivation behind this approach see \cite{DDB}).

For $m,n\in\mathbb{N}$ and $m\leq n$, let $[m, n]$ denote the set $\{m, m+1, m+2,...,n\}$. By $|A|$ we denote the cardinality of a set $A$. The lower and upper natural densities of $A \subseteq \mathbb{N}$ are defined by
$$
\underline{d}(A)=\displaystyle{\liminf_{n\to\infty}}\frac{|A\cap [0,n-1]|}{n} ~~\mbox{and}~~
\overline{d}(A)=\displaystyle{\limsup_{n\to\infty}}\frac{|A\cap [0,n-1]|}{n}.
$$
If $\underline{d}(A)=\overline{d}(A)$, we say that the natural density of $A$ exists and it is denoted by $d(A)$. As usual,
$$
\mathcal{I}_d = \{A \subseteq \mathbb{N}: d(A) = 0\}
$$
denotes the ideal of ``natural density zero" sets and $\mathcal{I}_d^*$ is the dual filter, i.e., $\mathcal{I}_d^* = \{A \subseteq \mathbb{N}: d(A) = 1\}$. Let us now recall the notion of statistical convergence in the sense of \cite{F,Fr,S,St,Z}.

\begin{definition}\label{Def1}
A sequence of real numbers $(x_n)$ is said to {\em converge} to a real number $x_0$ {\em statistically} if for any $\eps > 0$, $d(\{n \in \mathbb{N}: |x_n - x_0| \geq \eps\}) = 0$.
\end{definition}

It is now known \cite{S} that $x_n \to x_0$ statistically precisely when there exists a subset A of $ \N$ of asymptotic density 0, such that $\displaystyle{\lim_{n \in \N \setminus A}} x_n = x_0$ which makes this convergence interesting but not too wild. The concept of statistical convergence has been extensively studied over the years, beginning in metric spaces and later expanding to general topological spaces \cite{MK}. Over the past three decades, significant progress has been made in this area, largely due to its natural extension of the notion of usual convergence. Statistical convergence retains many fundamental properties of traditional convergence, while encompassing a broader class of sequences. For some insightful applications of statistical convergence, the readers are referred to \cite{BDK,CKG,FGT}. As a natural application, the following notion was introduced in \cite{DDB}:
\begin{definition}
For a sequence of integers $(a_n)$ the subgroup
\begin{equation}\label{def:stat:conv}
t^s_{(a_n)}(\T) := \{x\in \T: a_nx \to 0\  \mbox{ statistically in }\  \T\}
\end{equation}
of $\T$ is called {\em a statistically characterized} (shortly, {\em an s-characterized}) $($by $(a_n))$ {\em subgroup} of $\T$.
\end{definition}

It has later come to our notice that the notion of statistically characterized subgroup actually appeared without any name or in the above form in the work of Borel (see \cite{B1} where it was denoted by $G^*(\Lambda)$). The subsequent result justifies the investigation of this new notion of s-characterized subgroups, as it turns out that, though in general larger in size, these subgroups are still essentially topologically nice.

\begin{theorem}\label{theoremA}\cite[Theorem A]{DDB}
For any increasing sequence of integers $(a_n)$, the subgroup $t^s_{(a_n)}(\T)$ is a $F_{\sigma\delta}$ (hence, Borel) subgroup of $\T$ containing $t_{(a_n)}(\T)$.
\end{theorem}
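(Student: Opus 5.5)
The plan has three parts: check the subgroup property, check the inclusion, and write $t^s_{(a_n)}(\T)$ explicitly as a countable intersection of countable unions of closed sets. Throughout, $\|y\|$ denotes the distance from $y\in\T$ to $0$. The map $x\mapsto \|a_n x\|$ is continuous on $\T$ for each fixed $n$, and everything rests on this.

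\emph{Subgroup and inclusion.} Take $x,y\in t^s_{(a_n)}(\T)$ and $\eps>0$. Since $\|a_n(x-y)\|\le \|a_nx\|+\|a_ny\|$, we get
$$\{n:\|a_n(x-y)\|\ge\eps\}\subseteq\{n:\|a_nx\|\ge\eps/2\}\cup\{n:\|a_ny\|\ge\eps/2\}.$$
The right-hand side is a union of two sets in $\mathcal I_d$, so it lies in $\mathcal I_d$, and $x-y\in t^s_{(a_n)}(\T)$. Also $0$ clearly belongs to it. For the inclusion, if $a_nx\to0$ then for each $\eps>0$ the set $\{n:\|a_nx\|\ge\eps\}$ is finite, hence has density zero.

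\emph{Borel complexity.} Statistical convergence to $0$ means $\overline d(\{n:\|a_nx\|\ge 1/k\})=0$ for every $k$, and it is enough to use rational $\eps=1/k$. Fix $k$ and set $A_k(x)=\{n:\|a_nx\|\ge1/k\}$. The condition $\overline d(A_k(x))=0$ says that for every $m\in\N$ there is $N$ such that for all $n\ge N$
$$\frac{|A_k(x)\cap[0,n-1]|}{n}\le \frac1m .$$
This gives
$$t^s_{(a_n)}(\T)=\bigcap_{k}\bigcap_{m}\bigcup_{N}\bigcap_{n\ge N} F_{k,m,n},\qquad F_{k,m,n}=\Bigl\{x\in\T:\ |\{i<n:\|a_ix\|\ge 1/k\}|\le n/m\Bigr\}.$$
The key step is to show that each $F_{k,m,n}$ is closed. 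Once that holds, $\bigcap_{n\ge N}F_{k,m,n}$ is closed, the union over $N$ is $F_\sigma$, and the intersection over $k,m$ is $F_{\sigma\delta}$.

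\emph{Main obstacle.} The delicate point is the choice of strict versus non-strict inequalities, since the counting function must be semicontinuous in the right direction. For each $i$ the set $U_i=\{x:\|a_ix\|\ge1/k\}$ is closed. The set $F_{k,m,n}$ consists of the points lying in at most $\lfloor n/m\rfloor$ of the sets $U_0,\dots,U_{n-1}$. Its complement is the set of points lying in at least $\lfloor n/m\rfloor+1$ of them, which is a finite union of finite intersections of the $U_i$. That complement is closed, which is the wrong direction.

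To repair this, I will replace $U_i$ by the open set $V_i=\{x:\|a_ix\|>1/k\}$ and define $F'_{k,m,n}$ as the set of points lying in at most $n/m$ of the sets $V_0,\dots,V_{n-1}$. Its complement is a finite union of finite intersections of open sets, hence open, so $F'_{k,m,n}$ is closed. The characterization survives this change. Indeed,
$$\{n:\|a_nx\|>1/k\}\subseteq A_k(x)\subseteq\{n:\|a_nx\|>1/(2k)\},$$
so asking that the sets defined with strict inequality have upper density zero for all $k$ is equivalent to asking the same for the $A_k(x)$. Repeating the displayed formula with $F'_{k,m,n}$ in place of $F_{k,m,n}$ then exhibits $t^s_{(a_n)}(\T)$ as an $F_{\sigma\delta}$ set. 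Being $F_{\sigma\delta}$, it is Borel.
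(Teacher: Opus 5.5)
The paper gives no proof of this statement; it imports it from \cite{DDB}, so there is no internal argument to compare against. Your proof is correct and complete.

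The subgroup property and the inclusion $t_{(a_n)}(\T)\subseteq t^s_{(a_n)}(\T)$ are immediate, exactly as you argue. You also handle the one genuinely delicate point correctly. With the closed sets $U_i$, the sets $F_{k,m,n}$ are open, and that would only give a $G_{\delta\sigma\delta}$ description: still Borel, but not the claimed $F_{\sigma\delta}$ class. Passing to the open sets $V_i=\{x:\|a_ix\|>1/k\}$ makes each $F'_{k,m,n}$ closed. The sandwich $\{n:\|a_nx\|>1/k\}\subseteq A_k(x)\subseteq\{n:\|a_nx\|>1/(2k)\}$ then shows that, once you quantify over all $k$, the change from non-strict to strict inequality loses nothing.

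An equivalent way to see closedness at a glance: $F'_{k,m,n}$ is the set of $x$ for which at least $n-n/m$ of the indices $i<n$ satisfy the closed condition $\|a_ix\|\le 1/k$. This is a finite union of finite intersections of closed sets.

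Your argument never uses that $(a_n)$ is increasing, so it proves the statement for an arbitrary sequence of integers.
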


The following theorem leads to a general assertion about the size of s-characterized subgroups for arithmetic sequences.

\begin{theorem}\label{theoremB}\cite[Theorem B]{DDB}
Let $(a_n)$ be an arithmetic sequence. Then $|t^s_{(a_n)}(\T)|=\mathfrak c$.
\end{theorem}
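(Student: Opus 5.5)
Given an arithmetic sequence $(a_n)$, the plan is to use the canonical $(b_n)$-adic expansion of points of $\T$. Every $x\in[0,1)$ can be written uniquely as
$$x=\sum_{n\ge1}\frac{c_n}{a_n},\qquad 0\le c_n<b_n,$$
with $c_n<b_n-1$ for infinitely many $n$. For this expansion one has
$$a_nx \equiv \sum_{k>n}\frac{c_k}{a_k}a_n = \frac{c_{n+1}}{b_{n+1}}+\frac{c_{n+2}}{b_{n+1}b_{n+2}}+\cdots \pmod 1.$$
Its distance to $0$ in $\T$ is therefore controlled by $c_{n+1}/b_{n+1}$ together with the tail. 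The upper bound $|t^s_{(a_n)}(\T)|\le\mathfrak c$ is trivial, so the whole task is to construct $\mathfrak c$ many distinct points $x$ with $a_nx\to0$ statistically.

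First, the easy case is when $t_{(a_n)}(\T)$ already has size $\mathfrak c$. For instance, if $b_n\to\infty$ along a set of $n$, we can take $c_n\in\{0,1\}$ freely on a sparse infinite set where $b_n\ge 2$ is large. Since $t_{(a_n)}(\T)\subseteq t^s_{(a_n)}(\T)$ by Theorem~\ref{theoremA}, that would suffice. However, it fails in general, for example for bounded ratios such as $a_n=2^n$, where $t_{(a_n)}(\T)$ is countable. So I would give a uniform construction that does not use ordinary convergence.

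Fix a set $A=\{m_1<m_2<\cdots\}\subseteq\N$ of natural density zero which is very sparse, say $m_{k+1}>2^{m_k}$, so that $d(A+\{0,-1,\dots,-k\})=0$ even allowing growing windows. For each $\sigma\in\{0,1\}^{\N}$, define $x_\sigma$ by setting $c_n=0$ for $n\notin A$, and $c_{m_k}=\sigma(k)$ (recall $b_{m_k}\ge2$, so digit $1$ is allowed). Distinct $\sigma$ give distinct expansions, none of which end in maximal digits. Hence the $x_\sigma$ are $\mathfrak c$ many distinct points of $\T$.

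Next I would verify statistical convergence. Let $\eps>0$ and choose $r$ with $2^{-r}<\eps$. If $n$ is such that no element of $A$ lies in $[n+1,n+r]$, then
$$a_nx_\sigma \equiv \sum_{k>n+r}\frac{c_k a_n}{a_k}\pmod 1,$$
and the right-hand side is at most $\sum_{j>r}\frac{b_{n+j}-1}{b_{n+1}\cdots b_{n+j}}\le \frac{1}{b_{n+1}\cdots b_{n+r}}\le 2^{-r}<\eps$, using that $b_i\ge2$. Consequently,
$$\{n: \|a_nx_\sigma\|\ge\eps\}\subseteq \bigcup_{j=1}^{r}(A-j).$$
This is a finite union of translates of a density-zero set, hence has density zero. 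Therefore $x_\sigma\in t^s_{(a_n)}(\T)$. In fact I only need $d(A)=0$ here, not extreme sparseness, since $r$ is fixed once $\eps$ is fixed.

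I expect the main obstacle to be the tail estimate, namely making sure the infinitely many digits beyond $n+r$ cannot accumulate to something near $1$ (equivalently, near $0$ from the other side causes no issue, but near $1/2$ would). This is handled by the geometric bound $\prod b_i\ge 2^r$. The other point needing care is the uniqueness of expansions, which ensures that the map $\sigma\mapsto x_\sigma$ is injective. This holds because all our digits are $0$ or $1$ with infinitely many zeros, while the conventional nonuniqueness only involves expansions ending in $b_n-1$ digits, and $b_n-1\ge1$ could equal $1$ when $b_n=2$. To avoid that ambiguity, I will additionally force $\sigma$ to have infinitely many zeros, which still leaves $\mathfrak c$ choices.
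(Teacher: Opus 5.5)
Your proof is correct and is essentially the paper's argument. The paper obtains this statement as the special case $(e_n)=(a_n)$ of Theorem~\ref{sconthmain1}: there $n_k=k$ and $L(A)=A-1$, so any infinite $A$ with $d(A)=0$ witnesses weak $L$-invariance, and the proof likewise takes points with $supp(x)\subseteq A$, discards the density-zero set $\bigcup_{i<m}L(A-i)$, and outside it uses the tail bound $a_k/a_{k+m-1}\le 2^{-(m-1)}$. (Forcing $\sigma$ to have infinitely many zeros is unnecessary, since $c_n=0<b_n-1$ on the infinite set $\N\setminus A$ already makes every $x_\sigma$ canonical.)
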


As a consequence, we find the more general result that the new subgroup $t^s_{(a_n)}(\T)$ always
differs from the subgroup $t_{(a_n)}(\T)$ for arithmetic sequences.

\begin{theorem}\label{theoremC}\cite[Theorem C]{DDB}
For any arithmetic sequence $(a_n)$, $t^s_{(a_n)}(\T)\neq t_{(a_n)}(\T)$.
\end{theorem}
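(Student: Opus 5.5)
The plan is to construct explicitly a point $x\in t^s_{(a_n)}(\T)\setminus t_{(a_n)}(\T)$. We cannot just use Theorem~\ref{theoremB}, since $t_{(a_n)}(\T)$ may itself have size $\mathfrak c$. The idea is to put digits of the $(b_n)$-adic expansion of $x$ only at a very sparse set of positions. At those positions $a_nx$ stays far from $0$, while at all other positions it is tiny. We will use only the facts $b_n\ge 2$ for all $n$ and $a_n/a_m\le 2^{-(m-n)}$ for $n\le m$.

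\emph{Construction.} Fix an increasing sequence $n_1<n_2<\dots$ with $n_{j+1}-n_j\to\infty$ and $d(\{n_j\})=0$, for instance $n_j=j^2$. Put $c_j=\lfloor b_{n_j}/2\rfloor\ge 1$ and
\[
x=\sum_{j\ge1}\frac{c_j}{a_{n_j}} \pmod 1 .
\]
Then for every $n$, modulo $1$ we have $a_nx\equiv\sum_{n_j>n} c_j a_n/a_{n_j}$, because the terms with $n_j\le n$ are integers.

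\emph{Step 1: $x\notin t_{(a_n)}(\T)$.} Take $n=n_j-1$. The leading term is $c_j/b_{n_j}=\lfloor b_{n_j}/2\rfloor/b_{n_j}\in[1/3,1/2]$. The remaining terms satisfy
\[
\sum_{i>j}\frac{c_ia_n}{a_{n_i}}\le\sum_{i>j}\frac{a_n}{2a_{n_i-1}}\le\sum_{i>j}2^{-(n_i-n_j)}.
\]
Since the gaps $n_{i+1}-n_i$ tend to infinity, this sum tends to $0$ as $j\to\infty$. Hence $\|a_{n_j-1}x\|\ge 1/3-o(1)$, so $a_nx\not\to0$.

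\emph{Step 2: $x\in t^s_{(a_n)}(\T)$.} For arbitrary $n$, let $m$ be the least $n_j$ with $n_j>n$. The same estimate gives
\[
\|a_nx\|\le\sum_{n_i\ge m}2^{-(n_i-1-n)}\le 2\cdot 2^{-(m-1-n)}\cdot C,
\]
with $C$ an absolute constant. Given $\varepsilon>0$, choose $L$ with $2C\,2^{-L}<\varepsilon$. Then $\|a_nx\|\ge\varepsilon$ forces $n\ge m-1-L$, that is, $n\in\bigcup_j[n_j-L-1,n_j-1]$. The union has at most $(L+1)$ points per $n_j$, so its density is at most $(L+1)\,d(\{n_j\})=0$. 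This is exactly statistical convergence of $a_nx$ to $0$.

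The main obstacle I anticipate is the tail bookkeeping in Step 1. The lower bound at $n_j-1$ must not be destroyed by the later digits, and $b_{n_j}$ may be as small as $2$, so the digit gives only about $1/3$ or $1/2$. That is why the gaps must grow, making the tail $o(1)$. Choosing $c_j=\lfloor b_{n_j}/2\rfloor$ rather than $1$ ensures the leading term stays bounded away from both $0$ and $1$ regardless of whether $(b_n)$ is bounded, so no case distinction is needed.
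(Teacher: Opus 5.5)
Your proof is correct, and it is essentially the argument the paper gives for its generalization of this statement: the theorem right after Theorem~\ref{sconthmain1}, applied to $(e_n)=(a_n)$, which is trivially $L$-invariant because $L(A)$ is then a translate of $A$. Both use support in a sparse density-zero set, digits $c_n=\lfloor b_n/2\rfloor$, and an exceptional set made of boundedly many indices just before each support point. The only difference is that you prove $x\notin t_{(a_n)}(\T)$ directly, with growing gaps making the tail at $n_j-1$ negligible, whereas the paper cites the $b$-bounded/$b$-divergent criteria of \cite{DI1}; your version is self-contained at the small cost of requiring $n_{j+1}-n_j\to\infty$.
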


Unlike Theorem A, both Theorem B and Theorem C rely on  arithmetic sequence. Motivated by these observations, a series of open problems were posed in \cite{DDB,DDBH}.
\begin{problem}\label{prob1}\cite[Question 6.3]{DDB}
Do Theorems B and C hold true for arbitrary sequences $(u_n)$?
\end{problem}
\begin{problem}\label{prob2}\cite[Question 7.16]{DDBH}
Does there exist an infinite statistically characterized subgroup that can be characterized?
\end{problem}
\begin{problem}\label{prob3}\cite[Question 7.17]{DDBH}
Does there exist an increasing sequence $(u_n)$ such that $t^s_{(u_n)}(\T)$ is countably infinite?
\end{problem}

Coming back to the literature regarding characterized subgroups, in an interesting departure, a non-arithmetic sequence $(\zeta_n)$ was defined in \cite{DK} as follows:
\begin{equation}\label{eqnonarith}
1,2,4,6,12, 18, 24,  \ldots, n!, 2\cdot n!, 3 \cdot n!, \ldots , n \cdot n!, (n+1)!, \ldots
\end{equation}
It was established in \cite{DK} that $t_{(\zeta_n)}(\T) = \Q/\Z$. Motivated by this observation, for an arithmetic sequence $(a_n)$ the following general class of non-arithmetic sequences was introduced in \cite{DG8}. Let $(d_n^{a_n})$ be an increasing sequence of integers formed by the elements of the set,
\begin{equation}\label{nonarithdef}
\{r_{k+1}a_k \ : \ 1\leq r_{k+1}< b_{k+1}\}.
\end{equation}
We simply denote $(d_n^{a_n})$  by $(d_n)$ when there is no confusion in the indexing. Note that for $a_n=n!$ corresponding non-arithmetic sequence $(d_n)$ coincides with the sequence $(\zeta_n)$. A through investigation regarding the subgroup $t_{(d_n)}(\T)$, in particular cardinality and structure related questions and its relation with $t_{(a_n)}(\T)$ was carried out in \cite{DG8}. It is to be noted that $(a_n)$ is a subsequence of $(d_n)$. The natural question as to what happens when one considers an arbitrary subsequence of $(d_n)$ containing $(a_n)$ was carried out in \cite{DGT1}. To be more specific, we are interested here in those increasing sequences of integers $(e_n)$ (which will be called ``arithmetic-type" sequences) satisfying
\begin{equation}\label{seqarithtype}
(a_n)\subseteq (e_n^{a_n}) \subseteq (d_n^{a_n}).
\end{equation}
Again, when there is no confusion regarding the sequence $(a_n)$, we simply denote this sequence by $(e_n)$.
\begin{remark}\label{uconr1}
Since $(a_n)$ is a subsequence of $(e_n)$, we can write $a_k=e_{n_k}$. More precisely, we can write $e_{(n_{k-1}+i-1)}=r^i_ka_{k-1}$ where $1\leq i \leq n_{k}-n_{k-1}$ and $r_k^i\in [1,q_{k}-1]$ with $r^1_k=1$. In particular, if $(e_n)=(d_n)$ then $n_{k}-n_{k-1}=q_{k}-1$ and $d_{(n_{k-1}+i-1)}=ia_{k-1}.$ We simply denote $r_k^i$ by $r_k(e)$ when there is no confusion in the indexing.
\end{remark}
\begin{definition}\label{ucondeff2}
For a strictly increasing sequence $(n_k)$ of natural numbers, let us define the {\em lifting function} $L_{(n_k)} \ : \ \mathcal{P}(\N) \to \mathcal{P}(\N)$ as
$$
L_{(n_k)}(A)=\bigcup\limits_{k\in A} [n_{k-1},n_{k}-1].
$$
\end{definition}
When there is no confusion regarding the sequence $(n_k)$, we simply denote this lifting function by $L$. Based on this, we now introduce the three most important definitions, which will be used time and again in the sequel and will help us formulate our main results.
\begin{definition}
An arithmetic-type sequence $(e_n)$ corresponding to the arithmetic sequence $(a_n)$ is called {\em $L$-invariant} if for every infinite $A\subseteq\N$ with $d(A)=0$ implies ${d}(L(A))=0$.
\end{definition}
\begin{definition}
An arithmetic-type sequence $(e_n)$ corresponding to the arithmetic sequence $(a_n)$ is called {\em weakly $L$-invariant} if there exists an infinite $A\subseteq\N$ such that ${d}(L(A-m))=0$ for each $m\in\N\cup \{0\}$.
\end{definition}
\begin{definition}
An arithmetic-type sequence $(e_n)$ corresponding to the arithmetic sequence $(a_n)$ is called {\em strongly non $L$-invariant} if for any $b$-divergent $A\subseteq\N$, $\bar{d}(L(A))>0$.
\end{definition}
As a natural consequence of the works done in \cite{DG8} and \cite{DDB}, the subgroup $t^s_{(d_n)}(\T)$ was considered in \cite{DG9}. One of the main results of \cite{DG9} was the following interesting observation, which provides a negative solution to Question 6.6 posed in \cite{DDB}.
\begin{theorem}\label{sconth}\cite[Theorem 2.14]{DG9}
Let $(a_n)$ be an arithmetic sequence such that for each $m\in\N$, $\lim\limits_{n\to\infty}\frac{\sum\limits_{i=0}^{m-1} (b_{n-i}-1)}{\sum\limits_{i=1}^n (b_i-1)}=0$. Then $|t^s_{(d_n)}(\T)|=\mathfrak{c}$.
\end{theorem}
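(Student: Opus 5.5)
We will construct $\mathfrak c$ many points in $t^s_{(d_n)}(\T)$ explicitly from their canonical $(a_n)$-expansions. Every $x\in\T$ has a unique representation $x=\sum_{k\ge1}\frac{c_k}{a_k}$ with $0\le c_k<b_k$ and $c_k<b_k-1$ for infinitely many $k$. The plan is to pick an infinite set $A\subseteq\N$ and consider all $x$ with $c_k\in\{0,1\}$ for $k\in A$ and $c_k=0$ for $k\notin A$. Since $A$ is infinite, this family has cardinality $\mathfrak c$ (as $b_k\ge 2$). It then suffices to choose $A$ so that every such $x$ lies in $t^s_{(d_n)}(\T)$.

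For the estimates, recall from Remark \ref{uconr1} that the terms of $(d_n)$ are $d_{n_{k-1}+i-1}=ia_{k-1}$ with $1\le i\le b_k-1$, so the $k$-th block has length $b_k-1$. We have $n_k=\sum_{j\le k}(b_j-1)$ up to a constant. For such a term,
$$\|ia_{k-1}x\| \le \Big\| i\,\frac{c_k}{b_k}+ i\sum_{j>k}\frac{c_ja_{k-1}}{a_j}\Big\| \le \frac{i\,c_k}{b_k}+\frac{i}{b_kb_{k+1}}+\cdots .$$
If $c_k=0$ and $c_{k+1}=\dots=c_{k+m}=0$, the error term is at most $\frac{2i}{b_k\,b_{k+1}\cdots b_{k+m}}\le 2^{1-m}$. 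So all terms of block $k$ are small unless block $k$ is within distance $m$ after an index of $A$ (i.e.\ $k\in A+\{0,\dots,m\}$, up to the shift convention). Hence, for each fixed $\varepsilon$, choosing $m$ with $2^{1-m}<\varepsilon$, the set $\{n:\|d_nx\|\ge\varepsilon\}$ is contained in $L(\bigcup_{t=0}^{m}(A+t))$ up to finitely many points. We therefore need an infinite $A$ with $d\big(L(A+t)\big)=0$ for all $t\ge0$.

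To choose $A$, write $B_n=\sum_{i\le n}(b_i-1)$. The hypothesis says that for each $m$, $\sum_{i=0}^{m-1}(b_{n-i}-1)/B_n\to0$. Choose $k_1<k_2<\dots$ inductively so that $\sum_{t=0}^{j}(b_{k_j+t}-1)\le \frac1j B_{k_j-1}$. This is possible because, by the hypothesis applied with $n=k+j$ and $j+1$ in place of $m$, the ratio of the last $j+1$ block lengths to $B_{k+j}$ tends to $0$, and $B_{k+j}$ is comparable to $B_{k-1}$ once that ratio is small. We also require the $k_j$ to be sparse, say $B_{k_{j}-1}\ge 2^{j}B_{k_{j-1}+j}$. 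Let $A=\{k_j\}$.

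For fixed $t$ and any $N$, locate $N$ between consecutive lifted blocks. The count of $L(A+t)\cap[0,N]$ is then at most the contribution of the current $j$, which is $\le\frac1j B_{k_j-1}\le \frac1j N$, plus earlier contributions, which are $\le B_{k_{j-1}+j}\le 2^{-j}N$. Both tend to $0$, giving density zero.

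The main obstacle is exactly this bookkeeping. We must make the density estimate uniform in $N$ inside and just after a lifted block. We also need the tail bound to handle the case $c_k=1$ with $i$ up to $b_k-1$, where $\|ia_{k-1}x\|$ can be large; this is harmless only because such $k$ lie in $A$ and are absorbed into $L(A)$. Finally, we must check that the hypothesis (small last $m$ blocks relative to the total) is precisely what lets a bounded window of consecutive large blocks be negligible.
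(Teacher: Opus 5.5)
Your strategy matches how the paper recovers this statement. For $(d_n)$ one has $n_k-n_{k-1}=b_k-1$, so the case $m=1$ of the hypothesis is the condition of Proposition \ref{prow$L$-invariantqbound}. That gives weak $L$-invariance, and Theorem \ref{sconthmain1} then supplies $\mathfrak c$ points supported in a sparse set. However, your key containment has the shift in the wrong direction, and the inclusion you wrote is false.

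The quantity $\{a_{k-1}x\}=\frac{c_k}{b_k}+\frac{c_{k+1}}{b_kb_{k+1}}+\cdots$ involves only the \emph{later} digits $c_k,c_{k+1},\ldots$. Your own estimate therefore shows that block $k$ is harmless when $c_k=\cdots=c_{k+m}=0$, i.e. when $k\notin\bigcup_{t=0}^m(A-t)$. The dangerous blocks are those up to $m$ steps \emph{before} an element of $A$. The correct statement is $\{n:\|d_nx\|\ge\varepsilon\}\subset^* L\big(\bigcup_{t=0}^m(A-t)\big)$, which is exactly the family $L(A-m)$ in the definition of weak $L$-invariance.

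A counterexample to your inclusion: take $b_k=2$ for all $k$, which satisfies the hypothesis. Let $A=\{k_j\}$ with $k_j-k_{j-1}\to\infty$, and let $c_k=1$ exactly on $A$, a point of your family. Then $\{a_{k_j-2}x\}\to\frac14$. The single index of block $k_j-1$ lies in no $L(A+t)$ with $0\le t\le m$ once $j$ is large, so for $\varepsilon<\frac14$ infinitely many bad indices escape $L\big(\bigcup_{t=0}^m(A+t)\big)$. Consequently your density computation, which controls the forward windows $b_{k_j},\dots,b_{k_j+j}$, bounds the wrong set.

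The repair is short and makes your forward-window requirement on $k_j$ superfluous. Only sparsity is needed, say $B_{k_j-1}\ge 2^jB_{k_{j-1}}$, which your condition $B_{k_j-1}\ge 2^jB_{k_{j-1}+j}$ already implies. The hypothesis itself is a statement about backward windows, so it applies to any $k_j\to\infty$. Fix $t\ge1$; the case $t=0$ is the same with the second step trivial.
\begin{itemize}
\item By the case $m=1$, the block $k_j-t$ of $L(A-t)$ has length $b_{k_j-t}-1=o(B_{k_j-t-1})$.
\item By the case $m=t$ at $n=k_j-1$, we get $B_{k_j-1}-B_{k_j-t-1}=\sum_{s=1}^{t}(b_{k_j-s}-1)=o(B_{k_j-1})$, hence $B_{k_j-1}=(1+o(1))B_{k_j-t-1}$.
\item Take any $N\ge n_{k_j-t-1}\ge B_{k_j-t-1}$ lying before the next block of $L(A-t)$. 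The current block contributes at most $b_{k_j-t}-1=o(N)$.
\item All earlier blocks of $L(A-t)$ lie in $[n_0,n_{k_{j-1}-t}-1]$. They contribute at most $B_{k_{j-1}}\le 2^{-j}B_{k_j-1}\le 2^{-j}(1+o(1))N$.
\end{itemize}
Hence $d(L(A-t))=0$ for every $t\ge0$, so $A$ witnesses weak $L$-invariance of $(d_n)$. The rest of your argument (the tail estimate and the $\mathfrak c$ count) then goes through as in Theorem \ref{sconthmain1}. Your restriction $c_k\in\{0,1\}$ on $A$ is harmless but unnecessary, since the paper allows arbitrary digits on $A$.
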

Motivated by this observation, the following open problem was posed in \cite{DG9}.
\begin{problem}\label{prob4}\cite[Problem 2.16]{DG9}
Is $|t^s_{(d_n)}(\T)|=\mathfrak{c}$, for an arbitrary arithmetic sequence of integers $(a_n)$?
\end{problem}
We now present the most interesting observations carried out in \cite{DGT2} using terminologies adopted in this article, which provide solutions to Problem \ref{prob1}, Problem \ref{prob2}, Problem \ref{prob3} and Problem \ref{prob4}.
\begin{theorem}\label{sconthmain01}
If $(d_n)$ is weakly $L$-invariant, then $|t^{s}_{(d_n)}{(\mathbb{T})}|=\mathfrak{c}$.
\end{theorem}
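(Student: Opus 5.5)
We construct, for every $\omega$ in a perfect set of binary choices, an element $x_\omega \in t^s_{(d_n)}(\T)$, with distinct $\omega$ giving distinct points. The building block is the standard representation of $x\in\T$ via the arithmetic sequence $(a_n)$:
$$x=\sum_{k\ge1}\frac{c_k}{a_k},\qquad c_k\in[0,b_k-1].$$
Throughout, $q_k$ in Remark \ref{uconr1} is identified with $b_k$. Let $A\subseteq\N$ be the infinite set supplied by weak $L$-invariance, so that $d(L(A-m))=0$ for every $m\in\N\cup\{0\}$. Passing to an infinite subset of $A$ keeps this property, because $L$ is monotone. We therefore assume $A=\{k_1<k_2<\dots\}$ is very sparse, with $k_{j+1}-k_j\to\infty$. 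For each $\omega\in\{0,1\}^{\N}$ we put $c_{k_j}=\omega_j$ and $c_k=0$ for $k\notin A$, and set $x_\omega=\sum_j \omega_j/a_{k_j}$. The map $\omega\mapsto x_\omega$ is injective: the representation is unique as long as the digits are not eventually $b_k-1$, and here $b_k\ge2$ with digits in $\{0,1\}$, so the only problematic case $b_k=2$ is excluded by the sparsity of $A$. This will give $|t^s_{(d_n)}(\T)|=\mathfrak c$.

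Next we estimate $\|d_n x_\omega\|$ block by block. For $n\in[n_{k-1},n_k-1]$ we have $d_n=ia_{k-1}$ with $1\le i<b_k$. Hence
$$d_nx_\omega \equiv i\sum_{k_j\ge k}\frac{\omega_j a_{k-1}}{a_{k_j}} \pmod 1 .$$
Suppose $k_j\ge k$ is the least element of $A$ at or above $k$. Then the tail is at most $i\cdot 2a_{k-1}/a_{k_j}\le 2b_k/(b_k\cdots b_{k_j})$, which is small unless $k_j$ is within a bounded distance $m$ of $k$. More precisely, fix $\varepsilon>0$ and choose $M$ with $2^{-M+2}<\varepsilon$; since every $b_i\ge2$, this gives $\|d_nx_\omega\|<\varepsilon$ whenever $k_j-k\ge M$. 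The exceptional indices $n$ therefore lie in blocks $[n_{k-1},n_k-1]$ with $k\in\bigcup_{m=0}^{M}(A-m)$. Their union is $\bigcup_{m=0}^M L(A-m)$, a finite union of density-zero sets, so it has density zero. Consequently $d_nx_\omega\to0$ statistically for every $\omega$.

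The delicate step is the tail estimate: the factor $i$ can be as large as $b_k-1$, which would wipe out the gain from $1/b_k$ when $k_j=k$ or $k_j=k+1$. We handle this by including the shifts $A-0,\dots,A-M$ in the exceptional set. This is exactly where we need the hypothesis for every shift $m$, not just $m=0$. We must also check that the remaining sum, including later terms $k_{j'}>k_j$, is dominated geometrically; this follows from the sparsity of $A$ and from $b_i\ge2$. Finally, to handle the count rigorously (infinite products of $b$'s, possibly bounded $b_k$), we use $a_{k-1}/a_{k+m}\le 2^{-m-1}$ together with $i<b_k$. This gives the bound $\|d_nx_\omega\|\le 2\cdot 2^{-(k_j-k)}$ uniformly, which completes the argument.
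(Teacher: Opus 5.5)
Your proof is correct and follows essentially the paper's argument (the proof of Theorem \ref{sconthmain1}, specialized to $(d_n)$): take $x$ with support inside the weakly $L$-invariant set $A$, put the exceptional indices into the density-zero set $\bigcup_{m\le M}L(A-m)$, and bound all remaining terms by a geometric tail using $i<b_k$ and $b_i\ge 2$. The differences are cosmetic: you restrict the digits to $\{0,1\}$ and thin out $A$ so that injectivity and the tail sum are transparent, whereas the paper allows arbitrary digits on $A$ and uses $\sum_{i\ge j}c_i/a_i\le 1/a_{j-1}$.
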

\begin{theorem}\label{sconcoromain1}
If $(d_n)$ is weakly $L$-invariant then $t^{s}_{(d_n)}{(\mathbb{T})}\neq t_{(d_n)}(\mathbb{T})$.
\end{theorem}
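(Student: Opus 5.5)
We deduce the statement from Theorem \ref{sconthmain01} by a cardinality comparison with the ordinary characterized subgroup. The plan is to show that $t_{(d_n)}(\T)$ is countable, while by Theorem \ref{sconthmain01} the weakly $L$-invariant hypothesis gives $|t^s_{(d_n)}(\T)|=\mathfrak c$. Since $t_{(d_n)}(\T)\subseteq t^s_{(d_n)}(\T)$ always holds (Theorem \ref{theoremA}), the two subgroups then cannot coincide.

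\textbf{Countability of $t_{(d_n)}(\T)$.} Write $x\in\T$ in its canonical representation
\[
x=\sum_{k\ge1}\frac{c_k}{a_k}, \qquad 0\le c_k\le b_k-1,
\]
with $c_k<b_k-1$ for infinitely many $k$. For $1\le r<b_{k+1}$ we have
\[
r a_k x \equiv r\sum_{j>k}\frac{c_j a_k}{a_j}=r\Big(\frac{c_{k+1}}{b_{k+1}}+\frac{\theta_k}{b_{k+1}}\Big)\pmod 1,
\]
where $\theta_k\in[0,1]$ is the tail $\sum_{j>k+1} c_j a_{k+1}/a_j$. Set $y_k=(c_{k+1}+\theta_k)/b_{k+1}\in[0,1]$. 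Requiring $\|r y_k\|\to0$ uniformly over all $1\le r<b_{k+1}$ forces $y_k$ to lie within about $1/b_{k+1}$ of $0$ or of $1$. Indeed, if $\delta\le y_k\le 1-\delta$ with $\delta\gg 1/b_{k+1}$, some multiple $r y_k$ with $r<b_{k+1}$ lands near $1/2$.

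Quantitatively, the condition means that $c_{k+1}$ is eventually $0$, or that it and the tail stay close to $b_{k+1}-1$. The uniform requirement over all $r$, in particular $r$ near $b_{k+1}/2$, then forces, for large $k$, either $c_{k+1}=0$ with $\theta_k$ small, or $c_{k+1}=b_{k+1}-1$. A short induction shows the sequence $(c_k)$ is eventually constantly $0$ or eventually constantly $b_k-1$. The latter is excluded by normalization, so $x$ is a rational of the form $m/a_k$. Thus $t_{(d_n)}(\T)\subseteq\{m/a_k: k\in\N,\ m\in\Z\}$, which is countable.

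\textbf{Main obstacle.} The delicate step is the bookkeeping when $b_{k+1}$ is small, for instance $2$ or $3$. There the family $\{r a_k\}$ offers few multiples, so no $r$ need land near $1/2$, and one must combine consecutive blocks. I would try first to treat bounded $b_k$ via the carries linking $c_{k+1}$ and $\theta_k$. If this gets messy, an alternative suffices: the known result from \cite{DG8} that $t_{(d_n)}(\T)$ is countable (indeed contained in the subgroup generated by the $1/a_k$). Either way the cardinality gap finishes the proof.
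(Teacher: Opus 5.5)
Your argument is correct, but it takes a different route from the paper. Here the statement is recovered as the case $(e_n)=(d_n)$ of the theorem following Theorem \ref{sconthmain1}, namely $|t^s_{(e_n)}(\T)\setminus t_{(e_n)}(\T)|=\mathfrak c$. That theorem is proved by exhibiting witnesses. One takes $x$ with $supp(x)\subseteq A$, where $A$ witnesses weak $L$-invariance, and $c_n=\lfloor b_n/2\rfloor$ on the support. The proof of Theorem \ref{sconthmain1} puts $x$ in $t^s_{(e_n)}(\T)$, and the Dikranjan--Impieri description of $t_{(a_n)}(\T)$ from \cite{DI1} shows $x\notin t_{(a_n)}(\T)\supseteq t_{(e_n)}(\T)$. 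The paper's argument never needs to know what $t_{(e_n)}(\T)$ is. It therefore works for every arithmetic-type sequence, including those for which $t_{(e_n)}(\T)$ is uncountable and a pure cardinality comparison is useless. Your route instead uses the feature special to the full sequence $(d_n)$: all multiples $ra_k$ with $1\le r<b_{k+1}$ occur. This lets you determine $t_{(d_n)}(\T)$ completely, avoids the $b$-bounded/$b$-divergent case split and the appeal to \cite{DI1}, and still gives $|t^s_{(d_n)}(\T)\setminus t_{(d_n)}(\T)|=\mathfrak c$.

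Your digit-by-digit countability sketch is looser than necessary. The obstacle you anticipate for small $b_{k+1}$ disappears once you also use $r=b_{k+1}$, i.e.\ the term $a_{k+1}$ that opens the next block. Set $y_k=\{a_kx\}$ and assume $\|ra_kx\|<\epsilon<\frac13$ for all $1\le r\le b_{k+1}$.

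Suppose $0<y_k<\epsilon$. The reals $y_k,2y_k,\dots,b_{k+1}y_k$ increase in steps smaller than $\epsilon$. So the first of them to reach $\epsilon$ would lie in $[\epsilon,2\epsilon)\subseteq[\epsilon,1-\epsilon]$, which is forbidden, and all of them stay in $(0,\epsilon)$. Hence $y_{k+1}=b_{k+1}y_k\in(0,\epsilon)$ and $y_{k+1}\ge 2y_k$. Iterating gives $y_{k+j}\ge 2^jy_k$, which is absurd.

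The same argument applies to $1-y_k$, using $\|ry\|=\|r(1-y)\|$ and $1-y_{k+1}=b_{k+1}(1-y_k)$. Finally, $r=1$ rules out $y_k\in[\epsilon,1-\epsilon]$. So $a_kx\in\Z$ for all large $k$, i.e.\ $t_{(d_n)}(\T)=\bigcup_k\frac{1}{a_k}\Z/\Z$, and no citation of \cite{DG8} is needed.
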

\begin{theorem}\label{sconthmain2}
Suppose $(d_n)$ is not weakly $L$-invariant. Then $t_{(d_n)}^s(\T)=t_{(d_n)}(\T)$ if and only if $(d_n)$ is strongly non $L$-invariant.
\end{theorem}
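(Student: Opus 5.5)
Both directions rest on the standard description of $t_{(d_n)}(\T)$ and $t^s_{(d_n)}(\T)$ through the canonical representation of $x\in\T$ with respect to $(a_n)$. Write $x=\sum_{k\ge1} c_k/a_k$ with $0\le c_k<b_k$ and $c_k\neq b_k-1$ for infinitely many $k$. The basic estimate is that, for $n\in[n_{k-1},n_k-1]$ (so $d_n=r a_{k-1}$ with $1\le r<b_k$), the quantity $\|d_nx\|$ is governed by $\|r\,(c_k/b_k+c_{k+1}/(b_kb_{k+1})+\cdots)\|$. Consequently, if the digit block around $k$ is ``nonzero'' (namely $c_k\neq0$ or $c_k\neq b_k-1$ in a non-degenerate way), then a positive proportion of the block $[n_{k-1},n_k-1]$ has $\|d_nx\|$ bounded below, and in the $(d_n)$ case it is in fact essentially the whole block. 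I would first isolate this as a lemma. For $x$, let $A_x$ denote the set of $k$ at which $x$ fails to be ``close to $0$ at level $k$''. Then $x\in t_{(d_n)}(\T)$ iff $A_x$ is finite, and $x\in t^s_{(d_n)}(\T)$ iff $d(L(A_x\cup(A_x-1)))=0$ up to the $\eps$-quantification. I expect this to be the main technical obstacle, since in a single block the good indices may form a large portion whenever $c_k/b_k$ is tiny. The precise statement should be modelled on what is done in \cite{DG9}, with $b$-divergent sets (those $A$ with $\sum_{k\in A} 1/b_k$ or the relevant $b$-ratio divergent) being exactly the sets for which a point $x$ supported on $A$ lies outside $t_{(d_n)}(\T)$.

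($\Leftarrow$) Assume $(d_n)$ is strongly non $L$-invariant, and let $x\in t^s_{(d_n)}(\T)\setminus t_{(d_n)}(\T)$ for contradiction. Since $x\notin t_{(a_n)}$-type convergence along the blocks, the lemma yields a set $A$ of block indices where $x$ is bad. The set $A$ is infinite, and moreover $b$-divergent, because for non-divergent sparse supports the tail sums force $d_nx\to0$. By the lemma, $\|d_nx\|\ge\eps$ on a fixed fraction of each block $[n_{k-1},n_k-1]$ with $k\in A$. Hence $\bar d(\{n:\|d_nx\|\ge\eps\})\ge c\,\bar d(L(A))>0$ by strong non $L$-invariance, which contradicts statistical convergence. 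A fraction-of-block argument suffices here because upper density is computed along the block endpoints $n_k$, and the comparison of $|L(A)\cap[0,n)|$ with the bad count is linear.

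($\Rightarrow$) Suppose $(d_n)$ is not strongly non $L$-invariant. Then there is a $b$-divergent $A$ with $d(L(A))=0$. Choosing $A$ appropriately (thinning if needed while keeping $b$-divergence), I define $x=\sum_{k\in A} c_k/a_k$ with digits $c_k$ chosen, for instance $c_k=1$ or $\lfloor b_k/2\rfloor$ according to the divergence type, so that $x\notin t_{(d_n)}(\T)$; $b$-divergence is exactly what guarantees $\|d_nx\|\not\to0$. For statistical convergence, the lemma gives $\|d_nx\|$ small for $n$ outside $L(A)\cup L(A+1)$. The remaining concern is the shift. Here I use the hypothesis that $(d_n)$ is not weakly $L$-invariant only indirectly: I need $d(L(A+1))=0$ as well, and I would either replace $A$ by $A\cup(A+1)$ when constructing, checking that it remains a witness, or arrange the digits so that only blocks $k\in A$ matter by using $c_k$ vanishing at neighbours. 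Then $\{n:\|d_nx\|\ge\eps\}\subseteq L(A)\cup F$ with $F$ finite, so $x\in t^s_{(d_n)}(\T)\setminus t_{(d_n)}(\T)$, which gives the inequality of the two subgroups.
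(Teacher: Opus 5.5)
The paper quotes this theorem from \cite{DGT2} and does not prove it, so I am judging your sketch on its own. It has a genuine gap in the direction ($\Leftarrow$).

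You never use the hypothesis that $(d_n)$ is not weakly $L$-invariant, and without it the implication is false. Take $a_n=2^n$. Then $(d_n)=(a_n)$ and there are no $b$-divergent sets, so $(d_n)$ is vacuously strongly non $L$-invariant. Yet $t^s_{(a_n)}(\T)\neq t_{(a_n)}(\T)$ by Theorem~\ref{theoremC}.

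The step that breaks is your claim that the bad levels of $x\in t^s_{(d_n)}(\T)\setminus t_{(d_n)}(\T)$ form a $b$-divergent set, ``because for non-divergent sparse supports the tail sums force $d_nx\to0$''. In fact $t_{(d_n)}(\T)$ is exactly the set of finitely supported elements. So every $x$ with infinite $b$-bounded support lies outside it. Its bad blocks $[n_{k-1},n_k-1]$, $k\in\supp(x)$, have bounded length, and strong non $L$-invariance says nothing about them.

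Your proposed lemma with $A_x\cup(A_x-1)$ is too crude for the same reason. For a fixed $\eps$, badness propagates backwards through as many levels with bounded $b_i$ as $\eps$ allows, not just one. Also, $b$-divergent means $b_n\to\infty$ along $A$, not divergence of $\sum_{k\in A}1/b_k$.

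What is missing is a way to transfer badness from such levels to nearby large blocks. Put $\rho_k=(b_k-1)/n_k$.
\begin{itemize}
\item Failure of weak $L$-invariance gives $\delta>0$ and $M$ such that, for all large $k$, the window $[k-M,k]$ contains some $j$ with $\rho_j\ge\delta$. Otherwise, for every $q$ infinitely many windows $[k-q,k]$ miss $\{\rho\ge 1/q\}$, and a sparse diagonal choice of such $k$ witnesses weak $L$-invariance, as in the proof of the proposition assuming $(n_k-n_{k-1})/(n_k-n_0)\to0$.
\item Strong non $L$-invariance gives $K$ and $\delta'>0$ with $b_j\ge K\Rightarrow\rho_j\ge\delta'$ for large $j$. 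Otherwise a sparse sequence $k_t$ with $b_{k_t}\ge t$ and $\rho_{k_t}<1/t$ is $b$-divergent with lifting of density zero.
\end{itemize}
Now $x\notin t_{(d_n)}(\T)$ gives $\eta>0$ and infinitely many $k$ with $\max_{r<b_k}\|ra_{k-1}x\|\ge\eta$. If $b_k\ge K$ for infinitely many of these $k$, those blocks have relative size $\ge\delta'$ and a fixed proportion of each is bad.

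Otherwise $b_k<K$, so $\|a_{k-1}x\|\ge\eta/K$. Let $j$ be the last index in $[k-M,k]$ with $\rho_j\ge\min(\delta,\delta')$; then $b_i<K$ for $j<i\le k$. Iterating $\|a_{i-1}x\|\ge\|a_ix\|/b_i$ (which holds because $a_ix=b_i\,a_{i-1}x$) gives $(b_j-1)\|a_{j-1}x\|\ge\eta/(2K^M)$. Hence a fixed proportion of block $j$, whose relative size is $\ge\min(\delta,\delta')$, has $\|d_nx\|\ge\eta/(16K^M)$. So that set has positive upper density and $x\notin t^s_{(d_n)}(\T)$.

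Your ($\Rightarrow$) direction is fine once you commit to digits with $c_k/b_k\to0$ on the witness $A$, e.g.\ $c_k=1$. Then $r\{a_{k-1}x\}<\{a_kx\}\to0$ for every $k\notin A$, so no shift of $A$ matters. This is Lemmas~\ref{lemmasupport} and~\ref{divergentliftingzero}, valid for every arithmetic-type sequence, combined with the countability of $t_{(d_n)}(\T)$; that direction does not need the non-weak-invariance hypothesis at all.

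The alternatives you float do not work. With $c_k=\lfloor b_k/2\rfloor$, the preceding block $k-1$ sweeps $r\{a_{k-2}x\}$ up to about $1/2$, and $d(L(A-1))$ may be positive; note the relevant shift is $A-1$, not $A+1$. And $A\cup(A+1)$ is not $b$-divergent in general.
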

In this article, our primary focus is on the ``statistically characterized subgroups" generated by the sequences $(e_n)$, i.e., $t^s_{(e_n)}(\T)$ and to understand the cardinality pattern of the statistically characterized subgroup generated by the sequence $(e_n)$. In Theorem \ref{sconthmain1}, we have been able to show that for weakly $L$-invariant arithmetic-type sequences, they are always of size $\mathfrak{c}$. This in turn provides a more general view of Theorem \ref{theoremB}, Theorem \ref{sconth} and Theorem \ref{sconthmain01}. Further they are always bigger than the corresponding characterized subgroup which also provides more general view of Theorem \ref{theoremC} and Theorem \ref{sconcoromain1}. Another main observation states that if the underlying arithmetic-type sequence is not strongly non $L$-invariant, then also they have cardinality $\mathfrak{c}$. Now, note that if the arithmetic-type sequence $(d_n)$ is not weakly $L$-invariant and also strongly non $L$-invariant then Theorem \ref{sconthmain2} ensures that the cardinality of the subgroup $t_{(d_n)}^s(\T)$ is countable. We conclude this article with the observation ensuring that this result does not hold in general for the broader class of arithmetic-type sequences.
\section{Notation and terminology.\vspace{.3cm} \\ }
Throughout $\R$, $\Q$, $\Z$, $\P$ and $\N$ will stand for the set of all real numbers, the set of all rational numbers,
the set of all integers, the set of primes and the set of all natural numbers (note that we do not consider zero as a natural number) respectively. The first three are equipped with their usual abelian group structure and the circle group $\T$ is identified with the quotient group $\R/\Z$ of $\R$ endowed with its usual compact topology.

Following \cite{Ka}, we may identify $\T$ with the interval [0,1] identifying 0 and 1. Any real valued function $f$ defined on $\T$ can be identified with a periodic function defined on the whole real line $\R$ with period 1, i.e., $f(x+1)=f(x)$ for every real $x$. When referring to a set $X\subseteq \T$ we assume that $X\subseteq [0,1]$ and $0\in X$ if and only if $1\in X$. For a real $x$, we denote its fractional part by $\{x\}$ and the distance from the integers by $\|x\|=\min\big\{\{x\},1-\{x\}\big\}$.

For arithmetic sequences, the following facts (see \cite{DG9}) will be used in this sequel time and again. So, before moving onto our main results here we recapitulate that once.
\begin{fact}\label{lemmanew}\cite{DI1}
For any arithmetic sequence $(a_n)$ and $x\in [0,1]$, we can find a unique sequence $c_n\in [0,b_n-1]$ such that
\begin{equation}\label{canonical:repr}
x=\sum\limits_{n=1}^{\infty}\frac{c_n}{a_n},
\end{equation}
where $c_n<b_n-1$ for infinitely many $n$.
\end{fact}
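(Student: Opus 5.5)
We prove existence by an explicit greedy (mixed-radix) construction and uniqueness by a tail estimate; both rest on the telescoping identity $\sum_{n=1}^{\infty}\frac{b_n-1}{a_n}=1$ coming from $a_n=b_na_{n-1}$ and $a_0=1$. First, the endpoint. Since $0$ and $1$ are identified in $\T$, we read $x=1$ as $x=0$, which has the representation $c_n\equiv 0$. The tail identity shows that the all-maximal digit sequence sums to $1$, and that sequence is excluded by the side condition. So we may assume $x\in[0,1)$.

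\textbf{Existence.} Define
$$c_n:=\lfloor a_nx\rfloor-b_n\lfloor a_{n-1}x\rfloor ,\qquad n\ge 1 .$$
Since $a_0=1$ and $x\in[0,1)$, we have $\lfloor a_0x\rfloor=0$. Put $y=a_{n-1}x$. Then $a_nx=b_ny$, and
$$b_n\lfloor y\rfloor\le\lfloor b_ny\rfloor\le b_n\lfloor y\rfloor+b_n-1 ,$$
so $c_n\in[0,b_n-1]$. Dividing by $a_n$ and using $b_n/a_n=1/a_{n-1}$ gives
$$\frac{c_n}{a_n}=\frac{\lfloor a_nx\rfloor}{a_n}-\frac{\lfloor a_{n-1}x\rfloor}{a_{n-1}} .$$
The partial sums therefore telescope to $\sum_{n\le N}c_n/a_n=\lfloor a_Nx\rfloor/a_N$. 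This differs from $x$ by less than $1/a_N$, which tends to $0$, so the series converges to $x$.

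Next we check the side condition. Suppose instead that $c_n=b_n-1$ for all $n>N$. Applying the tail identity from index $N+1$ on gives $\sum_{n>N}(b_n-1)/a_n=1/a_N$. Hence
$$x=\frac{\lfloor a_Nx\rfloor+1}{a_N},$$
so $a_Nx=\lfloor a_Nx\rfloor+1$ is an integer strictly larger than its own floor, which is absurd. Thus $c_n<b_n-1$ for infinitely many $n$.

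\textbf{Uniqueness.} Let $(c_n)$ be any admissible representation of $x$. Multiply by $a_N$:
$$a_Nx=\sum_{n\le N}c_n\frac{a_N}{a_n}+a_N\sum_{n>N}\frac{c_n}{a_n}.$$
The first sum is an integer because $a_n\mid a_N$. For the tail, $0\le a_N\sum_{n>N}c_n/a_n\le a_N\sum_{n>N}(b_n-1)/a_n=1$. The inequality is strict, since infinitely many terms satisfy $c_n<b_n-1$. Consequently
$$\lfloor a_Nx\rfloor=\sum_{n\le N}c_n\,a_N/a_n\quad\text{for every }N .$$
Subtracting $b_N$ times the same identity at $N-1$ yields $c_N=\lfloor a_Nx\rfloor-b_N\lfloor a_{N-1}x\rfloor$. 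So the digits are forced to be exactly the greedy ones.

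The only delicate point is the strictness of the tail bound. It is precisely there that the condition ``$c_n<b_n-1$ infinitely often'' is used: without it, uniqueness fails for the points $k/a_N$, which would otherwise also have the all-maximal-tail expansion.
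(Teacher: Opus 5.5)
Your proof is correct and complete. The paper gives no proof of this fact; it imports it from \cite{DI1}. Your greedy digits $c_n=\lfloor a_nx\rfloor-b_n\lfloor a_{n-1}x\rfloor=\lfloor b_n\{a_{n-1}x\}\rfloor$ are the standard mixed-radix digits, consistent with the case $t=0$ of (\ref{eqlemain1}), and the telescoping plus the strict tail bound correctly give existence, the side condition and uniqueness.

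Your handling of the endpoint is also right. Read literally as a real number, $x=1$ has no admissible representation, because every admissible series sums to something strictly less than $1$. So the statement needs the identification $0\sim 1$ in $\T$ that the paper adopts.
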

%

For $x\in\T$ with canonical representation (\ref{canonical:repr}), we define
\begin{itemize}
\item[$\bullet$] $supp(x) = \{n\in \N: c_n \neq 0\}$,
\item[$\bullet$] $supp_b(x)=\{n\in\N\ : \ c_n=b_n-1\}$.
\end{itemize}

Note that, $A\subseteq\N$ is called
\begin{itemize}
\item[(i)] $b$-bounded if the sequence of ratios $(b_n)_{n\in A}$ is bounded.
\item[(ii)] $b$-divergent if the sequence of ratios $(b_n)_{n\in A}$  diverges to $\infty$.
\end{itemize}
We say $(a_n)$ is $b$-bounded ($b$-divergent) if $\N$ is $b$-bounded ($b$-divergent). Also, note that for each $j\in\N$,
\begin{equation}\label{eqsum}
\sum\limits_{i=j}^\infty \frac{c_i}{a_i} \leq \sum\limits_{i=j}^\infty \frac{b_i-1}{a_i} = \sum\limits_{i=j}^\infty \bigg(\frac{1}{a_{i-1}} - \frac{1}{a_i} \bigg) \leq \frac{1}{a_{j-1}}.
\end{equation}
\begin{lemma}\label{uconlmain}\cite[Lemma 3.1]{DR}
For $x\in[0,1]$ with canonical representation (\ref{canonical:repr}), for every natural $n > 1$ and every non-negative integer $t$,
\begin{equation}\label{eqlemain1}
\{a_{n-1}x\}=\frac{c_n}{b_n}+\frac{c_{n+1}}{b_nb_{n+1}}+\ldots+\frac{c_{n+t}}{b_n\ldots b_{n+t}}+\frac{\{a_{n+t}x\}}{b_n\ldots b_{n+t}}.
\end{equation}
In particular, for $t=1$, we get
\begin{equation}\label{eqlemain2}
\{a_{n-1}x\}=\frac{c_n}{b_n}+\frac{c_{n+1}}{b_nb_{n+1}}+\frac{\{a_{n+1}x\}}{b_nb_{n+1}}.
\end{equation}
\end{lemma}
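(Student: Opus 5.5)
The plan is to compute $\{a_{n-1}x\}$ directly from the canonical representation (\ref{canonical:repr}). First I will establish the case $t$ arbitrary by proving a single identity, namely that for every $m\ge 1$,
$$
\{a_{m-1}x\}=\sum_{i=m}^{\infty}\frac{c_i\,a_{m-1}}{a_i}=\sum_{i=m}^{\infty}\frac{c_i}{b_m b_{m+1}\cdots b_i}. \qquad (\ast)
$$
To see this, I will multiply (\ref{canonical:repr}) by $a_{m-1}$ and split the sum at $i=m$. For $i\le m-1$ we have $a_i\mid a_{m-1}$, since $(a_n)$ is arithmetic, so $\sum_{i<m}c_i a_{m-1}/a_i$ is an integer. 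It therefore suffices to show that the tail $T_m:=\sum_{i\ge m}c_i a_{m-1}/a_i$ lies in $[0,1)$. Then it equals the fractional part, and the second expression in $(\ast)$ follows from $a_i/a_{m-1}=b_m\cdots b_i$.

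The key step is the strict bound $T_m<1$. By (\ref{eqsum}), $T_m\le a_{m-1}\sum_{i\ge m}(b_i-1)/a_i$. The telescoping computation there actually gives the equality $\sum_{i\ge m}(b_i-1)/a_i = 1/a_{m-1}$, because $a_i\to\infty$. Since the canonical representation has $c_i<b_i-1$ for infinitely many $i$, at least one term with $i\ge m$ is strictly smaller than its majorant, so $T_m<1$. Nonnegativity is clear. I expect this strictness, which is exactly where uniqueness/canonicity in Fact \ref{lemmanew} is used, to be the only delicate point. Without it, a representation ending in $b_i-1$'s would give $T_m=1$ and $\{a_{m-1}x\}=0$, and the formula would fail.

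With $(\ast)$ in hand, I will apply it with $m=n$ and split the series after the index $n+t$:
$$
\{a_{n-1}x\}=\sum_{i=n}^{n+t}\frac{c_i}{b_n\cdots b_i}+\frac{1}{b_n\cdots b_{n+t}}\sum_{i=n+t+1}^{\infty}\frac{c_i}{b_{n+t+1}\cdots b_i}.
$$
Applying $(\ast)$ again with $m=n+t+1$ identifies the last series as $\{a_{n+t}x\}$, which yields (\ref{eqlemain1}). Setting $t=1$ gives (\ref{eqlemain2}).
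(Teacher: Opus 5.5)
Your proof is correct. The head $\sum_{i<m}c_i a_{m-1}/a_i$ is an integer by divisibility. The tail lies in $[0,1)$, with the strict bound coming from the telescoping equality $\sum_{i\ge m}(b_i-1)/a_i=1/a_{m-1}$ together with the canonical condition $c_i<b_i-1$ for some $i\ge m$. Splitting after $n+t$ and reapplying $(\ast)$ at $m=n+t+1$ then gives (\ref{eqlemain1}).

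The paper gives no proof of its own; it imports the lemma from \cite[Lemma 3.1]{DR}, so there is no in-paper argument to compare against. Your route is the natural one: it is equivalent to iterating the case $t=0$, namely $\{a_{n-1}x\}=(c_n+\{a_nx\})/b_n$.
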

\begin{fact}\label{eqInorm}
The following facts are well known for integer norm:
\begin{itemize}
\item for any $A\subseteq\N$ and $y_n=x_n+z_n$ if $\lim\limits_{n\in A} z_n=0$ then $\lim\limits_{n\in A}\|y_n\|=\lim\limits_{n\in A}\|x_n\|$.
\item for any integer $n$ and for any real number $x$, $\|n+x\|=\|x\|$.
\end{itemize}
\end{fact}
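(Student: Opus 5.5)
We prove Fact~\ref{eqInorm} directly from the definition $\|x\|=\min\{\{x\},1-\{x\}\}$. We first record that $\|\cdot\|$ is the distance from $x$ to the nearest integer, $\|x\|=\min_{m\in\Z}|x-m|$. This follows by writing $x=\lfloor x\rfloor+\{x\}$. The nearest integers to $x$ are $\lfloor x\rfloor$ and $\lfloor x\rfloor+1$, at distances $\{x\}$ and $1-\{x\}$, and every other integer is farther away.

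For the second bullet, take $n\in\Z$. The set of integers is invariant under translation by $n$, so
$$
\|n+x\|=\min_{m\in\Z}|n+x-m|=\min_{m'\in\Z}|x-m'|=\|x\|.
$$
Alternatively, $\{n+x\}=\{x\}$ because $\lfloor n+x\rfloor=n+\lfloor x\rfloor$, and the claim follows immediately from the definition.

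For the first bullet, the key step is the Lipschitz estimate $\big|\|y\|-\|x\|\big|\le |y-x|$ for all real $x,y$. To prove it, pick $m\in\Z$ with $\|x\|=|x-m|$. Then
$$
\|y\|\le |y-m|\le |y-x|+|x-m|=|y-x|+\|x\|.
$$
By symmetry in $x$ and $y$ the estimate follows. Applying it with $y=y_n=x_n+z_n$ and $x=x_n$ gives $\big|\|y_n\|-\|x_n\|\big|\le |z_n|$, and $|z_n|\to 0$ along $n\in A$.

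Hence, whenever $\lim_{n\in A}\|x_n\|$ exists, the limit $\lim_{n\in A}\|y_n\|$ also exists and the two coincide. The same bound also gives the converse implication, and the corresponding statement for statistical limits along density-one sets. We would read the statement as this equivalence of limits.

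The only real obstacle is interpretive rather than technical. The statement must be read as "one limit exists iff the other does, and then they agree." This holds by the inequality above, since limsup and liminf of the two sequences differ by at most $\limsup_{n\in A}|z_n|=0$. If $z_n$ is only known to tend to $0$ modulo $\Z$, we would replace $z_n$ by $z_n-k_n$ with $k_n\in\Z$ nearest to $z_n$. By the second bullet this substitution changes nothing.
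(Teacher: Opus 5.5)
Your proof is correct. The paper gives no argument for this Fact and simply calls it well known; your derivation is exactly the standard justification behind it. It uses the nearest-integer description $\|x\|=\min_{m\in\Z}|x-m|$, translation invariance under $\Z$, and the $1$-Lipschitz bound $\bigl|\|y\|-\|x\|\bigr|\le|y-x|$. Your reading of the first bullet as ``one limit along $A$ exists iff the other does, and then they agree'' is the right one, and that bound fully covers it.
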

Also note that for any $r\in\N$,
\begin{equation}\label{eqr}
\bullet \ \mbox{if } \ r\{a_nx\}<1\ \mbox{ then } \ \{ra_nx\}=r\{a_nx\}.
\end{equation}
\begin{equation}\label{eqrn}
\bullet \ \mbox{if } \ r\|a_nx\|<\frac{1}{2} \ \mbox{ then } \ \|ra_nx\|=r\|a_nx\|.
\end{equation}
\begin{remark}\label{uconr01}
Observe that $L$ is injective and for a sequence $(A_n)$ of sets in $\mathbb{N}$, we can write
\begin{itemize}
\item $L(\displaystyle\bigcup_{n=1}^{\infty}A_n)=\displaystyle\bigcup_{n=1}^{\infty}L(A_n)$ and $L(\displaystyle\bigcap_{n=1}^{\infty}A_n)=\displaystyle\bigcap_{n=1}^{\infty}L(A_n)$,
\item $L(A_m\setminus A_n)=L(A_m)\setminus L(A_n)$ for all $m,n\in\mathbb{N}$.
\end{itemize}
\end{remark}
For $A\subseteq\mathbb{N}$ and $m\in\mathbb{N}\cup \{0\}$, we write
$$A-m=\{n\in\mathbb{N}:n=a-m~\mbox{ for some }~a\in A\}.$$
\vspace{.1cm}

\section{Main results.\vspace{.3cm} \\ }

\begin{lemma}\label{le$L$-invariantqboundnew}   Let $(e_n)$ be an arithmetic-type sequence. Then for any $b$-bounded $A\subseteq\N$, $d(A)=0$ implies $d(L(A))=0$.
\end{lemma}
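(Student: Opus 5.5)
Let $A\subseteq\N$ be $b$-bounded with $d(A)=0$, and fix $M$ with $b_k\le M$ for all $k\in A$. The plan is to compare the block lengths of $L(A)$ with the indices in $A$, and then push a density bound from $A$ through the lifting. The key structural input is Remark \ref{uconr1}. The block $[n_{k-1},n_k-1]$ has length $n_k-n_{k-1}$, which is the number of multipliers $r_k^i$ available in $[1,b_k-1]$. Hence
$$n_k-n_{k-1}\le b_k-1\le M-1 \quad\text{for every } k\in A.$$
Moreover $n_k$ is strictly increasing, so $n_k-n_{k-1}\ge 1$ always. Consequently $n_{K}\ge K$ (up to a fixed additive constant coming from $n_0$) for every $K$. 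This gives the two estimates we need: each lifted block is uniformly short, and the lifting never compresses indices.

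Next, estimate $|L(A)\cap[0,N-1]|$ for large $N$. Let $K$ be the largest index with $n_{K-1}\le N-1$; this $K$ is where $N$ falls among the block endpoints. Every block $[n_{k-1},n_k-1]$ with $k\in A$ meeting $[0,N-1]$ has $k\le K$, and each such block contributes at most $M-1$ points. Therefore
$$|L(A)\cap[0,N-1]|\le (M-1)\,|A\cap[1,K]|.$$
Since $N-1\ge n_{K-1}\ge K-1-C$ for a fixed constant $C$, we get
$$\frac{|L(A)\cap[0,N-1]|}{N}\le (M-1)\,\frac{|A\cap[1,K]|}{K}\cdot\frac{K}{N}.$$
Here $K/N$ is bounded, and $K\to\infty$ as $N\to\infty$ because $n_k$ is finite for each $k$. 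Using $d(A)=0$, the right-hand side tends to $0$, so $d(L(A))=0$.

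The step needing the most care is the bookkeeping at the boundary. The block containing $N$ may be only partially inside $[0,N-1]$, and $n_0$ together with the indexing convention for $a_0=e_{n_0}$ must be handled so that $n_K\ge K-C$ really holds. Both issues only change the count by a bounded amount, so they are absorbed after dividing by $N$. The essential inequality is block length $\le b_k-1$, which holds because $(e_n)$ is a subsequence of $(d_n)$ and the elements between $a_{k-1}$ and $a_k$ have the form $r a_{k-1}$ with $1\le r<b_k$.
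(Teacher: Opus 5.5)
Your proposal is correct and is essentially the paper's argument: both rest on the bound $n_k-n_{k-1}\le b_k-1<M$ for $k\in A$ and on $n_k\ge k$, so the lifting does not compress indices. The only difference is packaging: the paper passes to the left-endpoint set $A'=\{n_{k-1}:k\in A\}$, notes $\bar d(A')\le\bar d(A)=0$, and uses the cover $L(A)\subseteq\bigcup_{i=0}^{M}(A'+i)$, whereas you count the blocks meeting $[0,N-1]$ directly.
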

\begin{proof}
Let $(e_n)$ be an arithmetic-type sequence corresponding to the arithmetic sequence $(a_n)$.
Suppose $A$ is a $b$-bounded subset of $\N$ such that $d(A)=0$. Then there exists $M\in\mathbb{N}$ such that $2\leq b_n\leq M$ for each $n\in A$. Now Remark \ref{uconr1} entails that for each $k\in A$
$$
a_k=e_{n_k}~\mbox{ and }~ n_{k}-n_{k-1}\leq b_{k}-1<M.
$$
Let us set $A'=\{n_{k-1}:k\in A\}$. It is clear that $n_k\geq k$ for each $k\in\mathbb{N}$. Therefore, observe that
\begin{align*}
	\bar{d}(A')&=\limsup_{n\to\infty}\frac{|\{j\in\mathbb{N}:j\leq n~\mbox{and}~j\in A'\}|}{n}\\
	 &\leq \limsup_{n\to\infty}\frac{|\{j\in\mathbb{N}:j\leq n~\mbox{and}~j\in A\}|}{n}\\
	&=\bar{d}(A)=0.
\end{align*}
It is easy to realize that $L(A)\subseteq\displaystyle\bigcup_{i=0}^{M}(A'+i)$. Since $d(A')=0$, we obtain $d(A'+i)=0$ for each $i\in\mathbb{N}$. This also ensures that $d(L(A))=0$.
\end{proof}
\begin{proposition}\label{pro$L$-invariantqbound}
Any arithmetic-type sequence corresponding to a fixed $b$-bounded arithmetic sequence is $L$-invariant, but the converse does not hold.
\end{proposition}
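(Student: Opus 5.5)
The statement has two halves, and I would treat them separately: the forward implication follows at once from Lemma \ref{le$L$-invariantqboundnew}, and the failure of the converse needs an explicit example.

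\emph{Forward direction.} Let $(a_n)$ be $b$-bounded, so $\N$ itself is a $b$-bounded set. Every $A\subseteq\N$ is then $b$-bounded as well. Take any arithmetic-type sequence $(e_n)$ corresponding to $(a_n)$ and any infinite $A\subseteq\N$ with $d(A)=0$. Lemma \ref{le$L$-invariantqboundnew} applies directly and gives $d(L(A))=0$. Hence $(e_n)$ is $L$-invariant, and nothing beyond the lemma is needed.

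\emph{Failure of the converse.} Here I must exhibit an arithmetic-type sequence that is $L$-invariant although its underlying $(a_n)$ is not $b$-bounded. The cheapest choice is $(e_n)=(a_n)$ itself. This is allowed by (\ref{seqarithtype}), since $(a_n)\subseteq(a_n)\subseteq(d_n)$.

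With this choice, $n_k=k$ in Remark \ref{uconr1} (up to the indexing offset), so every block $[n_{k-1},n_k-1]$ is a single point. Thus $L(A)$ is just a shift of $A$ by at most one, and $d(L(A))=d(A)=0$ for any $A$ of density zero. So $(e_n)$ is $L$-invariant however fast $b_n$ grows. Taking any $b$-divergent $(a_n)$, for example $a_n=n!$, gives an $L$-invariant arithmetic-type sequence whose arithmetic sequence is not $b$-bounded.

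The only point needing care is that the converse is meant to be read as ``$L$-invariant $\Rightarrow$ $(a_n)$ is $b$-bounded'', and the example refutes exactly that. If one wants a less trivial example, one could instead take $(e_n)$ with blocks of length $n_k-n_{k-1}\le C$ uniformly, selecting at most $C$ multiples $r a_{k-1}$ per block. The argument of Lemma \ref{le$L$-invariantqboundnew} then works verbatim with $M$ replaced by $C$, because that proof only used the bound on the block lengths and not the bound on $b_k$. Either construction settles the claim. I do not expect a real obstacle beyond checking the indexing conventions of $L$ in the trivial case.
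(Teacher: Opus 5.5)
Your forward direction is correct and is exactly the paper's argument. The second half, however, refutes a converse that the statement does not intend. The proposition is about a fixed arithmetic sequence $(a_n)$. If $(a_n)$ is $b$-bounded, then \emph{every} arithmetic-type sequence over $(a_n)$ is $L$-invariant. The converse is therefore: if every arithmetic-type sequence over $(a_n)$ is $L$-invariant, then $(a_n)$ is $b$-bounded. This is what the paper refutes.

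The reading you chose (a single $L$-invariant $(e_n)$ forces $b$-boundedness) fails at every non-$b$-bounded $(a_n)$. As you observe yourself, $(e_n)=(a_n)$ gives $L(A)=A-1$. So under your reading the clause ``the converse does not hold'' would carry no content, and your example says nothing about the intended converse. In fact $a_n=n!$ (so $b_k=k$) does not witness it. Let $(e_n)$ contain all multiples $ra_{k-1}$ with $1\le r\le k-1$ when $k=j^2$, $j\ge 2$, and only $a_{k-1}$ otherwise, and let $A=\{j^2:j\ge 2\}$. Then $d(A)=0$. Before the position of $a_{J^2}$ there are roughly $J^2+J^3/3$ terms, and about $J^3/3$ of them lie in $L(A)$, so $\bar{d}(L(A))=1$. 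Your bounded-block variant has the same defect, because it again controls only one chosen $(e_n)$.

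The missing idea is an unbounded $(b_n)$ whose large values sit on indices so sparse that even the maximal blocks, of length $b_k-1$ (those of $(d_n)$), fill only a density-zero part of $\N$. The paper does this as follows.
\begin{itemize}
\item Take $D=\bigcup_j[l_j,m_j]$ with $d(D)=1$ and $l_{j+1}-m_j\to\infty$.
\item Enumerate $D$ as $(n_k)$, so that $n_{s_j}=m_j$ and $n_{s_j+1}=l_{j+1}$.
\item Set $b_{k+1}=n_{k+1}-n_k+1$. Then $b_k=2$ except at $k=s_j+1$, where $b_{s_j+1}=l_{j+1}-m_j+1\to\infty$.
\end{itemize}
Given $d(A)=0$, split $A$ into two parts. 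Its $b$-bounded part is handled by the lemma on $b$-bounded sets. The remaining part $I$ lies in $\{s_j+1:j\in\N\}$, and its lift is contained in $(\N\setminus D)\cup\{m_j:j\in\N\}$, a set of density zero.

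The paper writes this lift estimate only for the maximal-block sequence $(d_n)$. It extends uniformly to every $(e_n)$ for two reasons. First, the total length of the long blocks up to index $K$ is already $o(K)$ for $(d_n)$, hence also for every $(e_n)$, since its blocks are no longer. Second, any $(e_n)$ has at least $K$ terms before $a_K$.
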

\begin{proof}
Assume that $(e_n)$ is an arithmetic-type sequence associated with the
$b$-bounded arithmetic sequence $(a_n)$. Pick any $A\subseteq\mathbb{N}$ such that $d(A)=0$. As $A$ is again $b$-bounded, Lemma \ref{le$L$-invariantqboundnew} ensures that $d(L(A))=0$. Consequently,  $(e_n)$ is $L$-invariant.\\

To illustrate that the converse does not necessarily hold, we construct an arithmetic sequence $(a_n)$ that is not $b$-bounded, yet every arithmetic-type sequence $(e_n)$ corresponding to $(a_n)$ is $L$-invariant.\\
We construct a set \( D \subseteq \mathbb{N} \) such that
$$
D=\bigcup_{j=1}^{\infty}[l_j,m_j]
$$
where $l_1=1, l_j\leq m_j,|l_{j+1}-m_j|\to\infty$, and $d(D)=1$ (note that the existence of such a set can always be guaranteed; for instance, one may choose $m_j - l_j = j^3$ and $l_{j+1} - m_j = j$
). Let us rewrite
$$
K=\{n_0\ (=1)<n_1<n_2<...<n_k<...\}.
$$
From the construction of $D$, one can obtain a sequence $(s_j)$ such that $n_{s_j}=m_j$ and $n_{{s_j}+1}=l_{j+1}$.
For each $k\in\mathbb{N}\cup\{0\}$, we set
$$
b_{k+1}=n_{k+1}-n_{k}+1.
$$
Note that
\begin{equation*}
b_k =
\left\{
\begin{array}{lr}
l_{j+1}-m_{j}+1&\mbox{ if }~k=s_{j}+1,\\
2&\mbox{otherwise}.
\end{array}
\right.
\end{equation*}
Thus, the associated arithmetic sequence $(a_n)$ is given by
\begin{equation*}
    a_0=1~\mbox{ and }~a_{n+1}=b_{n+1}a_n.
\end{equation*}
Since $(b_{s_j+1})$ is divergent, it is evident that $(a_n)$ is not $b$-bounded.

Let $(e_n)$ be any arithmetic-type sequence corresponding to $(a_n)$. Now, let us pick any infinite $A\subseteq\mathbb{N}$ with $d(A)=0$. If $A$ is $b$-bounded then from Lemma \ref{le$L$-invariantqboundnew} it follows that $d(L(A))=0$. So, let us assume that $A$ is not $b$-bounded. We set
$$
B=\big\{n\in A : n\notin \{s_j+1: j\in\N\}\big\},
$$
and,
$$
I=\big\{n\in A : n\in \{s_j+1: j\in\N\}\big\}.
$$
From the construction of $(b_n)$, one can observe that $B,I\subseteq\mathbb{N}$ with $B$ being $b$-bounded and $I$ being $b$-divergent such that $A=B\cup I$. Since $B\subseteq A$ and $d(A)=0$ we already have $d(B)=0$. Therefore, from Lemma \ref{le$L$-invariantqboundnew}, it is evident that $d(L(B))=0$. Since $I\subseteq\{s_j+1:j\in\mathbb{N}\}$, we also have
$$
L(\{s_j+1:j\in\mathbb{N}\})\subseteq \displaystyle\bigcup_{j=1}^{\infty}[m_j,l_{j+1}-1] \subseteq (\N\setminus D)\cup \{m_j:j\in\mathbb{N}\},
$$
and consequently,
\begin{eqnarray*}
\bar{d}(L(I)) \leq \bar{d}(L(\{s_j+1:j\in\mathbb{N}\})) &\leq& \bar{d}((\mathbb{N}\setminus D) \cup \{m_j:j\in\mathbb{N}\}) \\ &\leq& \bar{d}(\mathbb{N}\setminus D) +\bar{d}(\{m_j:j\in\mathbb{N}\}) \\ &=& \bar{d}(\{m_j:j\in\mathbb{N}\})\ \mbox{ (since $d(D)=1)$} \\ &=& 0 \ \mbox{ (since $|m_{k+1}-m_{k}|\to\infty$)}.
\end{eqnarray*}
Finally, in view of Remark \ref{uconr01}, we can conclude that $d(L(A))=0$. Thus, $(e_n)$ is $L$-invariant.  Since $(e_n)$ is chosen arbitrarily, we conclude that every arithmetic-type sequence corresponding to $(a_n)$ is $L$-invariant.
\end{proof}

\begin{proposition}\label{pro$L$-invariantw$L$-invariant}
Any $L$-invariant arithmetic-type sequence is weakly $L$-invariant.
\end{proposition}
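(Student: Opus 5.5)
We need an infinite set $A\subseteq\N$ such that $d(L(A-m))=0$ for every $m\in\N\cup\{0\}$. The idea is to take a very sparse infinite set $A$ and use $L$-invariance for each translate separately.

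First choose $A$ with density zero that stays density zero under all left shifts. A concrete choice is $A=\{2^j: j\in\N\}$, or any infinite set whose gaps tend to infinity. Note that
$$A-m=\{a-m: a\in A,\ a>m\}\subseteq\N.$$
Counting gives $|(A-m)\cap[0,n-1]|\le |A\cap[0,n+m-1]|$. Since $m$ is fixed and $d(A)=0$, this yields $d(A-m)=0$. In addition, $A-m$ is infinite because $A$ is infinite and only finitely many elements of $A$ are at most $m$.

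Next apply the hypothesis. Since $(e_n)$ is $L$-invariant, every infinite set of density zero has image under $L$ of density zero. Applying this to the infinite set $A-m$ of density zero gives $d(L(A-m))=0$ for each $m\in\N\cup\{0\}$. The case $m=0$ is simply $A$ itself. So $A$ witnesses that $(e_n)$ is weakly $L$-invariant.

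There is no real obstacle here. The only points to check are that each translate $A-m$ is still infinite, which the definition of $L$-invariance requires, and that it still has density zero. Both follow from the shift-counting bound above.
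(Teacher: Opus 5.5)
Your proposal is correct and follows essentially the same route as the paper. The paper fixes an infinite density-zero set $A$, notes that each translate $A-m$ has density zero, and applies $L$-invariance to each translate; your explicit check that $A-m$ is still infinite (which the definition of $L$-invariance requires) and your shift-counting bound fill in details the paper leaves implicit.
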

\begin{proof}
Let $(a_n)$ be an arithmetic sequence and $(e_n)$ any corresponding $L$-invariant arithmetic-type sequence.
 Fix an infinite set $A \subseteq \mathbb{N}$ such that $d(A)=0$.  Since $d$ is translation invariant, $d(A-m)=0$ for each $m\in \mathbb{N}\cup \{0\}$.
 As $(e_n)$ is $L$-invariant, it follows that ${d}(L(A-m))=0$ for each $m\in\mathbb{N}\cup \{0\}$.  Consequently, $(e_n)$ is weakly $L$-invariant.
\end{proof}

\begin{proposition}\label{prow$L$-invariantqbound}
Let $(e_n)$ be an arithmetic-type sequence corresponding to the arithmetic sequence $(a_n)$ with $e_{n_k}=a_k$ for each $k\in\N\cup\{0\}$. If $\lim\limits_{k\to\infty}\frac{ n_{k}-n_{k-1}}{n_k-n_0}=0$ then $(e_n)$ is weakly $L$-invariant.
\end{proposition}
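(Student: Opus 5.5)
We need to produce an infinite $A$ with $d(L(A-m))=0$ for every $m\ge 0$. Write $g_k=n_k-n_{k-1}$ for the length of the block $[n_{k-1},n_k-1]$, so $L(\{k\})$ is exactly this block and $n_k-n_0=\sum_{i\le k}g_i$. The hypothesis says $g_k/(n_k-n_0)\to 0$, i.e.\ each single block is negligible compared with everything before it. The plan is to take $A$ to be a very sparse set $A=\{k_1<k_2<\dots\}$ with gaps tending to infinity extremely fast. Then each $A-m$ is a sparse set of the same shape, and $L(A-m)$ is a union of single blocks $[n_{k_j-m-1},n_{k_j-m}-1]$.

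First, I will bound the counting function of a union of isolated blocks. Let $B=\{k_j-m: j\}$ with $k_{j+1}-k_j\to\infty$, and take $N$ with $n_{k-1}\le N<n_k$. Then $|L(B)\cap[0,N]|$ is at most $\sum_{j: k_j-m\le k} g_{k_j-m}$.

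The key estimate is that if consecutive elements of $B$ are far apart, each block is a tiny fraction of the stretch preceding it. Concretely, for $\varepsilon>0$ choose $K$ with $g_k\le \varepsilon (n_k-n_0)$ for $k\ge K$. For the largest element $\ell=k_j-m$ of $B$ not exceeding $k$, the ratio splits into two contributions:
\begin{itemize}
\item the partial current block contributes at most $\min(g_\ell, N-n_{\ell-1}+1)/(N+1)$, which is at most $g_\ell/n_\ell$ when $N\ge n_\ell$, and is controlled otherwise;
\item the earlier blocks contribute at most $\sum_{i<j} g_{k_i-m}$.
\end{itemize}

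The main obstacle is the case where $N$ lies inside the current block $[n_{\ell-1},n_\ell-1]$. There the fraction can be about $(N-n_{\ell-1})/N$, which is bounded by $g_\ell/n_{\ell-1}$. We need $g_\ell/n_{\ell-1}\to0$, and this follows since $g_\ell/(n_\ell-n_0)\to0$ implies $g_\ell/(n_{\ell-1}-n_0)\to 0$. For the earlier blocks, the hypothesis gives $g_{k_i-m}\le \varepsilon n_{k_i-m}$ eventually. The sum $\sum_{i<j} n_{k_i-m}$ is at most about $j\, n_{k_{j-1}-m}$, which we want to be $o(n_{\ell-1})$. Since $n_k\ge k$ and $n$ is strictly increasing, I choose $k_j$ recursively so that $n_{k_j-j-1}\ge j^2\, n_{k_{j-1}}$.

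This choice is uniform in $m$: for fixed $m$ and $j>m$ we have $n_{k_j-m-1}\ge n_{k_j-j-1}\ge j^2 n_{k_{j-1}}\ge j^2 n_{k_{j-1}-m}$. Hence the earlier blocks contribute at most $j\,n_{k_{j-1}-m}/n_{k_j-m-1}\le 1/j\to0$, and finitely many small $j$ only add a bounded amount.

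Combining the three parts gives $\bar d(L(A-m))\le$ (the limsup of the block ratio) $+0=0$ for each fixed $m$. This proves weak $L$-invariance. Also $d(A)=0$ automatically, though it is not needed.
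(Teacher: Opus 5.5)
Your proposal is correct and takes essentially the same route as the paper: a recursively chosen sparse set whose growth condition makes all earlier blocks negligible uniformly in the shift $m$, while the hypothesis $(n_k-n_{k-1})/(n_k-n_0)\to 0$ makes the current block negligible. The paper's growth condition is $u_{j+1}>u_j+j+1$ together with $n_{u_{j+1}}>j\sum_{i=1}^{j}\sum_{t=0}^{i}(n_{u_i+1-t}-n_{u_i-t})$, where yours is $n_{k_j-j-1}\ge j^2 n_{k_{j-1}}$; also, the paper evaluates the density ratio at right endpoints of blocks and splits off the last two blocks separately, whereas you treat arbitrary $N$ directly via $g_\ell/n_{\ell-1}\to 0$.
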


\begin{proof}
Assume that $(e_n)$ is an arithmetic-type sequence corresponding to $(a_n)$ such that  $e_{n_k}=a_k$ for each $k\in\N\cup\{0\}$, and  \begin{equation}\label{wdl condition}
 \lim\limits_{k\to\infty}\frac{ n_{k}-n_{k-1}}{n_k-n_0}=0.
 \end{equation}
We now define the sequence $(u_n)$ recursively as follows:
	$$u_1=1~\mbox{ and }~u_{j+1}=\min\{k\in\mathbb{N}:~ k>u_{j}+j+1~\mbox{ and }~ n_{k}>j\sum_{i=1}^{j}\sum_{t=0}^{i} (n_{u_i+1-t}-n_{u_i-t})\}.$$
 From the construction of $(u_n)$, it is evident that
 \begin{equation} \label{Eq 12}
 n_{u_{j-1}}>(j-2)\displaystyle\sum_{i=1}^{j-2}\sum_{t=0}^{i} (n_{u_i+1-t}-n_{u_i-t})~\mbox{ for each }~j\in\mathbb{N}\setminus\{1\}.
  \end{equation}
Set $A=\{u_j+1:j\in\mathbb{N}\}$. We claim that $d(L(A-m))=0$ for each $m\in\mathbb{N}\cup\{0\}$. Choose $m\in\mathbb{N}\cup\{0\}$. Then it is clear that $u_j>u_{j-1}+m+1~$ whenever $~j> m$. Let us set
 $$
 B_m=\displaystyle\bigcup_{j=m+1}^{\infty}[n_{u_j-m},n_{{u_j-m}+1}-1].
 $$
First, observe that $L(A-m)\subset^* B_m$. Next note that
 \allowdisplaybreaks
 \begin{eqnarray*}
 \bar{d}(B_m)&=&\limsup_{n\to\infty}\frac{|B_m\cap[1,n]|}{n}\\
 &=&\limsup_{j\to\infty}\frac{|B_m\cap[1,n_{{u_j-m}+1}-1]|}{n_{{u_j-m}+1}-1}\\
  &\leq&\limsup_{j\to\infty}\frac{|B_m\cap[1,n_{u_{j-2}-m+1}-1]|}{n_{{u_j}-m+1}-1} \\ & &+ \limsup_{j\to\infty}\frac{|B_m\cap[n_{u_{j-1}-m}, n_{{u_{j-1}}-m+1}-1]|}{n_{{u_j}-m+1}-1}\\
  & &+ \limsup_{j\to\infty}\frac{|B_m\cap[n_{u_j-m}, n_{{u_j}-m+1}-1]|}{n_{{u_j}-m+1}-1}\\
 &\leq&\limsup_{j\to\infty}\frac{\displaystyle\sum_{i=m+1}^{j-2} (n_{u_i-m+1}-n_{u_i-m})}{n_{{u_{j-1}}}}+\limsup_{j\to\infty}\frac{(n_{u_{j-1}-m+1}-n_{u_{j-1}-m})}{\bigg(n_1-1+\displaystyle\sum_{i=2}^{u_j-m+1}(n_i-n_{i-1})\bigg)}
 \\
  & &+\limsup_{j\to\infty}\frac{(n_{u_{j}-m+1}-n_{u_{j}-m})}{\bigg(n_1-1+\displaystyle\sum_{i=2}^{u_j-m+1}(n_i-n_{i-1})\bigg)}\\
  &\leq& \limsup_{j\to\infty} \frac{\displaystyle\sum_{i=1}^{j-2}\sum_{t=0}^{i} (n_{u_i+1-t}-n_{u_i-t})}{n_{u_{j-1}}}+\limsup_{j\to\infty}\frac{n_{u_{j-1}-m+1}-n_{u_{j-1}-m}}{n_{u_{j-1}-m+1}-n_{0}}\\ & & +\limsup_{j\to\infty}\frac{n_{u_{j}-m+1}-n_{u_{j}-m}}{n_{u_j-m+1}-n_{0}}\\
&\leq& \lim_{j\to\infty}\frac{1}{j-2}+0~\mbox{ (in view of Eq (\ref{wdl condition}) and Eq (\ref{Eq 12}))}\ =\ 0.
 \end{eqnarray*}
 Consequently, $d(B_m)=0$ and this ensures that $d(L(A-m))=0$ since $L(A-m)\subset^* B_m$. As \( m \in \mathbb{N}\cup\{0\} \) was chosen arbitrarily, we can deduce that $(e_n)$ is weakly $L$-invariant.
\end{proof}
\let\thefootnote\relax\footnotetext{For any two subsets $A$ and $B$ of $\mathbb{N}$ we will denote $A\subset^*B$ if $A\setminus B$ is finite and $A=^*B$ if $A\Delta B$ is finite.}

\begin{theorem}\label{sconthmain1}
If $(e_n)$ is weakly $L$-invariant then $|t^{s}_{(e_n)}{(\mathbb{T})}|=\mathfrak{c}$.
\end{theorem}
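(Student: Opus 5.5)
Fix an infinite set $A\subseteq\N$ witnessing weak $L$-invariance, so that $d(L(A-m))=0$ for every $m\in\N\cup\{0\}$. By discarding elements we may assume $A$ is sparse, i.e.\ consecutive elements are far apart and tend apart. The plan is to build $\mathfrak c$ many points $x\in\T$ whose canonical digits (Fact \ref{lemmanew}) are supported in $A$. For every infinite $B\subseteq A$ with infinite complement in $A$, set $x_B=\sum_{n\in B}\frac{c_n}{a_n}$, where the digits $c_n\in[1,b_n-1]$ are chosen as $c_n=1$ (or, if one prefers to avoid $b_n=2$ issues, any fixed nonzero choice with $c_n<b_n-1$). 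Distinct $B$ give distinct canonical representations, hence distinct points, and there are $\mathfrak c$ such $B$. Thus it suffices to show $x_B\in t^s_{(e_n)}(\T)$, i.e.\ that $\|e_nx_B\|\to0$ along a set of density one.

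The key estimate uses Lemma \ref{uconlmain} and Remark \ref{uconr1}. For an index $n\in[n_{k-1},n_k-1]$ we have $e_n=r\,a_{k-1}$ with $1\le r<b_k$, so
\[
\{e_nx\}\equiv r\{a_{k-1}x\}=r\Big(\frac{c_k}{b_k}+\frac{c_{k+1}}{b_kb_{k+1}}+\cdots\Big)\pmod 1.
\]
Let $m_k$ be the distance from $k$ to the next element of $B$ (so $c_k=\dots=c_{k+m_k-1}=0$ and $c_{k+m_k}\neq0$). Then $\{a_{k-1}x\}\le \frac{1}{b_k\cdots b_{k+m_k-1}}$ by (\ref{eqsum}), so
\[
\|e_nx\|\le \frac{r}{b_k\cdots b_{k+m_k-1}}< \frac{1}{b_{k+1}\cdots b_{k+m_k-1}}\le 2^{-(m_k-1)}.
\]
Hence for any fixed $M$, the indices $n$ with $\|e_nx\|\ge 2^{-M}$ lie in $L(\{k: m_k\le M\})\subseteq \bigcup_{m=0}^{M}L(B-m)\subseteq\bigcup_{m=0}^{M}L(A-m)$, where indices $k$ landing on $B$ itself correspond to $m=0$. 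There is one edge case: $k\in B$ gives $c_k\ne0$, and then the bound fails; this is exactly the $m=0$ term. By weak $L$-invariance and Remark \ref{uconr01}, this finite union has density zero. Since $M$ is arbitrary, $e_nx_B\to0$ statistically, and the theorem follows.

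The main obstacle is the careful bookkeeping of the modulus estimate at block boundaries. This includes the off-by-one between $k$ and the interval $[n_{k-1},n_k-1]$ in the definition of $L$, the convention $\{a_{k-1}x\}$ versus the index $k$ in $A-m$, and the requirement that elements of $A-m$ be in $\N$. I would first verify the identity $e_n=r_k a_{k-1}$ on $[n_{k-1},n_k-1]$ precisely, then align the shift so that the bad set is literally $L(B-m)$ for $m\le M$. Since $L$ is monotone, $L(B-m)\subseteq L(A-m)$, which makes the density-zero conclusion immediate. If the alignment comes out shifted by one, I would use $A+1$ or drop finitely many terms, since finite modifications do not affect density.
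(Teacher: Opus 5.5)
Your proposal is correct and is essentially the paper's argument. You take points whose canonical support lies in the witnessing set $A$, use the tail bound (\ref{eqsum}) to get $\|e_nx\|<2^{-M}$ on every block $[n_{k-1},n_k-1]$ with $k\notin\bigcup_{m=0}^{M}(A-m)$, and conclude because the finite union $\bigcup_{m=0}^{M}L(A-m)$ has density zero; your explicit family $x_B$ just makes concrete the paper's count of $\mathfrak c$ many such points. Your index alignment ($e_n=r\,a_{k-1}$ with $r<b_k$ exactly on $L(\{k\})$) is already correct, so the sparsification step and the fallback to $A+1$ are unnecessary. The $A+1$ fallback would in fact be unsafe, since weak $L$-invariance gives no control over $L(A+1)$.
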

\begin{proof}
Assume that  $(e_n)$ is weakly $L$-invariant. Then there exists an infinite $A\subseteq \mathbb{N}$ such that $d(L(A-m))=0$ for each $m\in\mathbb{N}$. Let us write $A=\{u_1+1<u_2+1<...<u_k+1<...\}$.
Consider $x\in\mathbb{T}$ such that
$$
supp(x)\subseteq A.
$$
Let $0<\epsilon<\frac{1}{2}$ be given. Choose $m\in\mathbb{N}$ such that $\frac{1}{2^{m-2}}<\epsilon$. Set
$$
B=[1,n_{u_m-m+1}-1]\cup\bigcup_{j=m}^{\infty} [n_{u_j-m+1},n_{u_j+1}-1].
$$
Since $d(\displaystyle\bigcup_{i=0}^{m-1} L(A-i))=0$ and $B=^* \displaystyle \bigcup_{i=0}^{m-1} L(A-i)$, we can conclude that $d(B)=0$.
Note that, for sufficiently large $j\in \mathbb{N}\setminus B$, we have $j=n_{k}+i-1$ with $i\in[1,n_{k+1}-n_k]$ where $k\notin \displaystyle\bigcup_{j=m}^{\infty}[u_j-m+1,u_{j}+1]$, i.e., $k,k+1,...,k+m-1\notin supp(x)$.  Note that $e_jx=e_{(n_{k}+i-1)}x=r^{i}_{k+1}a_kx$ where $1\leq r^{i}_{k+1}\leq b_{k+1}$.
Subsequently, we obtain
\begin{eqnarray*}
    r^{i}_{k+1}\{a_kx\}&\leq &r^{i}_{k+1}a_k\sum_{i=k+m}^{\infty}\frac{c_i}{a_i}\\
              &\leq & \frac{r^{i}_{k+1}a_k}{a_{k+m-1}} \leq \frac{r^{i}_{k+1}}{b_{k+1}}\frac{a_{k+1}}{a_{k+m-1}}\leq \frac{1}{2^{m-2}}<\epsilon.
\end{eqnarray*}
Note that in view of Eq (\ref{eqr}), we can write $\{r^{i}_{k+1}a_kx\}=r^{i}_{k+1}\{a_kx\}$. Then, there exists a natural number $j_\varepsilon$ such that for any $j\in \N\setminus B$ and $j\geq j_\varepsilon$, we have
\begin{align*}
    \{e_jx\}=\{r^{i}_{k+1}a_kx\}=r^{i}_{k+1}\{a_kx\}<\epsilon.
\end{align*}
This entails that $x\in t^{s}_{(e_n)}{(\mathbb{T})}$. Since $A$ is infinite there exists $\mathfrak{c}$ many $x\in\T$ such that $supp(x)\subseteq A$. Thus, $t_{(e_n)}^{s}(\mathbb{T})$ contains $\mathfrak{c}$ many elements.
\end{proof}
\begin{theorem}
 If $(e_n)$ is weakly $L$-invariant then $|t_{(e_n)}^{s}(\mathbb{T})\setminus t_{(e_n)}(\mathbb{T})|=\c$.
\end{theorem}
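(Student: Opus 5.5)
We refine the construction in the proof of Theorem \ref{sconthmain1}. Fix the infinite set $A=\{u_1+1<u_2+1<\dots\}$ given by weak $L$-invariance, so that $d(L(A-m))=0$ for every $m\in\N\cup\{0\}$. Passing to an infinite subset of $A$ preserves this property, because $L$ is monotone and subsets of density-zero sets have density zero. So we may assume the $u_j$ are as sparse as we like; in particular we take $u_{j+1}>u_j+j+1$. The plan is to exhibit $\mathfrak c$ many $x$ with $supp(x)\subseteq A$, hence $x\in t^s_{(e_n)}(\T)$ by Theorem \ref{sconthmain1}, for which $e_nx\not\to 0$ in $\T$.

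The candidates are $x$ whose canonical coefficients satisfy $c_n=0$ for $n\notin A$. For $n=u_j+1\in A'$, where $A'\subseteq A$ is an infinite set, we take $c_{u_j+1}=\lfloor b_{u_j+1}/2\rfloor$ (which is $\geq 1$ since $b\geq 2$). For $n\in A\setminus A'$ we let $c_n\in\{0,\lfloor b_n/2\rfloor\}$ vary freely.
\begin{itemize}
\item[$\bullet$] \emph{Cardinality.} If $A\setminus A'$ is infinite, which we arrange by splitting $A$ into two infinite pieces, we get $\mathfrak c$ many distinct $x$, distinct by uniqueness of the canonical representation in Fact \ref{lemmanew}. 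We must also check $c_n<b_n-1$ infinitely often, which holds since $c_n=0$ off $A$.
\item[$\bullet$] \emph{Failure of ordinary convergence.} We test along the subsequence $e_{n_{u_j}}=a_{u_j}$, which is available because $(a_n)\subseteq(e_n)$. By Lemma \ref{uconlmain} with $n=u_j+1$,
$$\{a_{u_j}x\}=\frac{c_{u_j+1}}{b_{u_j+1}}+\frac{c_{u_j+2}}{b_{u_j+1}b_{u_j+2}}+\frac{\{a_{u_j+2}x\}}{b_{u_j+1}b_{u_j+2}}.$$
\end{itemize}

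The first term lies in $[1/3,1/2]$ whenever $b_{u_j+1}\geq 2$. This is clear for $b=2$ (value $1/2$), for $b=3$ (value $1/3$), and in general $\lfloor b/2\rfloor/b\geq 1/3$. The remaining terms need care. Sparseness gives $c_{u_j+2}=0$, because $u_{j+1}>u_j+1$. The tail is bounded by the inequality (\ref{eqsum}) type estimate: since the next nonzero coefficient occurs at index $\geq u_{j+1}+1\geq u_j+3$, we get $\{a_{u_j+2}x\}\leq a_{u_j+2}\sum_{i\geq u_j+3}c_i/a_i\leq 1$. Hence the tail is at most $1/(b_{u_j+1}b_{u_j+2})\leq 1/4$.

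This bookkeeping is the main obstacle, because the value $1/2$ plus a small positive tail could push the fraction near $1$ in norm. To handle it, I would compute more carefully. Using the gaps $u_{j+1}-u_j\geq j+1$ and iterating (\ref{eqsum}) as in the proof of Theorem \ref{sconthmain1}, we get $\{a_{u_j+1}x\}\leq a_{u_j+1}/a_{u_{j+1}}\leq 2^{-j}$. Therefore
$$\{a_{u_j}x\}\in\Big[\tfrac{c_{u_j+1}}{b_{u_j+1}},\ \tfrac{c_{u_j+1}}{b_{u_j+1}}+\tfrac{2^{-j}}{b_{u_j+1}}\Big].$$
For large $j$ this gives $\|a_{u_j}x\|\geq 1/3-2^{-j}\geq 1/4$, whether the first term is $1/3$ or $1/2$ (when it is $1/2$, the perturbation keeps the distance to the integers at least $1/2-2^{-j}$). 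For $j\in A'$ the coefficient is forced nonzero, so infinitely many indices witness $\|e_nx\|\geq 1/4$. Hence $x\notin t_{(e_n)}(\T)$, while $x\in t^s_{(e_n)}(\T)$. This yields $|t^s_{(e_n)}(\T)\setminus t_{(e_n)}(\T)|=\mathfrak c$.
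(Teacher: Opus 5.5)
Your proof is correct, but it shows $x\notin t_{(e_n)}(\T)$ by a different route from the paper.

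\textbf{The paper's route.} The paper keeps $A$ as given. It takes $x$ with infinite $supp(x)\subseteq A$ and $c_n=\lfloor b_n/2\rfloor$ on the support, and uses $t_{(e_n)}(\T)\subseteq t_{(a_n)}(\T)$. It then relies on the Dikranjan--Impieri characterization \cite{DI1}, with a case split:
\begin{itemize}
\item if $supp(x)$ is $b$-bounded, Corollary 3.2 of \cite{DI1} excludes $x$ from $t_{(a_n)}(\T)$;
\item otherwise one extracts a $b$-divergent $B\subseteq supp(x)$ with $c_n/b_n\to 1/2$ on $B$, and Theorem 2.3(b) of \cite{DI1} excludes $x$.
\end{itemize}

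\textbf{Your route.} You note that weak $L$-invariance passes to infinite subsets, since $L$ is monotone. You thin $A$ so that $u_{j+1}-u_j\geq j+2$. Then (\ref{eqsum}) gives $\{a_{u_j+1}x\}\leq a_{u_j+1}/a_{u_{j+1}}\leq 2^{-j}$. Hence $\{a_{u_j}x\}$ lies within $2^{-j}/b_{u_j+1}$ above $\lfloor b_{u_j+1}/2\rfloor/b_{u_j+1}\in[1/3,1/2]$, so $\|e_{n_{u_j}}x\|\geq 1/4$ for infinitely many $j$.

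\textbf{Comparison.} Your argument is self-contained, needs no $b$-bounded versus $b$-divergent case split and no external results, and gives a uniform lower bound. The price is the thinning step, plus fixing an infinite $A'$ of nonzero digits so that every member of your family has infinite support. The thinning is genuinely needed for your direct estimate. Without sparseness, for example with $b_n=2$ on a run of consecutive support indices, $\{a_{u_j}x\}$ can be close to $1$; this is the situation the paper hands off to \cite{DI1}.

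\textbf{Minor slip.} In your last step, ``for $j\in A'$'' should read ``for $u_j+1\in A'$''.
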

\begin{proof}
  Assume that $(e_n)$ is any weakly $L$-invariant arithmetic-type sequence corresponding to the arithmetic sequence $(a_n)$. Then one can find an infinite set $A\subset \N$ such that $d(L(A-m))=0$ for each $m\in\N$.  Pick any $x \in \mathbb{T}$ with infinite support such that
\begin{equation}\label{eqnddiff}
supp(x) \subseteq A~\mbox{ and }~
c_n = \left\lfloor \frac{b_n}{2} \right\rfloor~\mbox{for each }~n \in \mathbb{N}.
\end{equation}
Since
$supp(x) \subseteq A$ and $d(L(A-m))=0$ for each $m\in\N$, Theorem \ref{sconthmain1} entails that $x\in t_{(e_n)}^{s}(\mathbb{T})$.

Next, we will show that $x \notin t_{(e_n)}(\mathbb{T})$.
Suppose, to the contrary, that $x \in t_{(e_n)}(\mathbb{T})$.
Observe that $ t_{(e_n)}(\mathbb{T})\subseteq t_{(a_n)}(\mathbb{T})$, since
$(a_n)$ is a subsequence of $(e_n)$. Thus, it follows that if $x \notin t_{(a_n)}(\mathbb{T})$, then  $x \notin t_{(e_n)}(\mathbb{T})$.  Now consider the following cases:
\begin{itemize}
    \item First, assume that $supp(x)$ is $b$-bounded. Since $x\in \mathbb{T}$ such that $supp(x)$ is an infinite $b$-bounded subset of $\N$, \cite[Corollary 3.2]{DI1} entails that $x\notin t_{(a_n)}(\mathbb{T})$. This ensures that $x\notin t_{(e_n)}(\mathbb{T})$.
 \item Next, suppose that  $supp(x)$ is not $b$-bounded.\\
Then there exists  $B\subseteq supp(x)$ such that $B$ is $b$-divergent. Since $\displaystyle\lim_{\substack{n\to\infty \\ n\in B}}\frac{c_n}{b_n}=\frac{1}{2} (\neq 0)$, \cite[Theorem 2.3(b)]{DI1} entails that $x\notin t_{(a_n)}(\mathbb{T})$. Thus,  $x\notin t_{(e_n)}(\mathbb{T})$.
\end{itemize}
So, in either case, we obtain $x\in t_{(e_n)}^{s}(\mathbb{T})\setminus t_{(e_n)}(\mathbb{T})$. Since there are $\c$ many $x\in\T$ satisfying Eq. (\ref{eqnddiff}), we can conclude that $|t_{(e_n)}^{s}(\mathbb{T})\setminus t_{(e_n)}(\mathbb{T})|=\c$.
\end{proof}
\begin{lemma}\label{lemmasupport}
Let $(e_n)$ be an arithmetic-type sequence corresponding to the arithmetic sequence $(a_n)$. If $x\in\mathbb{T}$ satisfies $\displaystyle\lim_{n\to\infty}\frac{c_n}{b_n}=0$ and $d(L(supp(x)))=0$, then $x\in t^{s}_{(e_n)}(\mathbb{T})$.
\end{lemma}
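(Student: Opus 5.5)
We must show that for every $\varepsilon\in(0,\frac12)$ the set of $j$ with $\|e_jx\|\geq\varepsilon$ has density zero. The natural exceptional set is $L(\mathrm{supp}(x))$ together with one shifted copy, and off this set we estimate $\|e_jx\|$ directly from the canonical representation.

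Fix $\varepsilon$ and use $c_n/b_n\to0$ to choose $N$ with $c_n/b_n<\varepsilon/4$ for all $n\geq N$. Let $S=\mathrm{supp}(x)$ and put
$$
B=L(S)\cup L(S-1)\cup[1,n_N].
$$
We first need $d(L(S-1))=0$. The set $L(S-1)$ consists of the blocks $[n_{k-2},n_{k-1}-1]$ for $k\in S$, so it is the union of the blocks preceding those of $L(S)$. This step is the main obstacle, since the lifting function does not commute with shifts in general.

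I would avoid needing density of $L(S-1)$ altogether by handling indices in blocks $k$ with $k+1\in S$ directly. For $j=n_k+i-1$ we have $e_j=r\,a_k$ with $r=r^i_{k+1}\leq b_{k+1}-1$. By Lemma \ref{uconlmain},
$$
\{a_kx\}=\frac{c_{k+1}}{b_{k+1}}+\frac{\{a_{k+1}x\}}{b_{k+1}}.
$$
Hence
$$
r\{a_kx\}=\frac{r\,c_{k+1}}{b_{k+1}}+\frac{r\{a_{k+1}x\}}{b_{k+1}}.
$$
The term $r c_{k+1}/b_{k+1}$ can be as large as $c_{k+1}$, an integer-scale quantity. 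So the estimate does not directly give smallness, because $\|r c_{k+1}/b_{k+1}\|$ need not be small. Thus the indices with $k+1\in S$, namely $L(S-1)$, genuinely must be discarded.

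I would therefore read the exceptional set off the lifting indexing. For $j\in[n_{k-1},n_k-1]$ we have $e_j=r^i_k a_{k-1}$, so the block $L(\{k\})$ is exactly where the multiplier $r_k<b_k$ acts on $a_{k-1}x$. The dangerous coefficient is $c_k$, which matches $L(S)$. With this indexing, $L(S)$ is precisely the set to discard and no shift is needed. So take
$$
B=L(S)\cup[1,n_N].
$$

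For $j\notin B$, say $j=n_{k-1}+i-1$ with $k\notin S$ and $k>N$, we have $c_k=0$. Applying Lemma \ref{uconlmain} at level $k$ (one level deeper),
$$
\{a_{k-1}x\}=\frac{c_{k+1}}{b_kb_{k+1}}+\frac{\{a_{k+1}x\}}{b_kb_{k+1}}.
$$
Multiplying by $r=r_k<b_k$ gives
$$
r\{a_{k-1}x\}<\frac{c_{k+1}}{b_{k+1}}+\frac{1}{b_{k+1}}.
$$
The first term is below $\varepsilon/4$. For the second term, if $b_{k+1}$ is large it is small. If $b_{k+1}$ is small, then $c_{k+1}/b_{k+1}<\varepsilon/4$ forces $c_{k+1}=0$ eventually, and we iterate the expansion one more step. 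Concretely, Lemma \ref{uconlmain} with general $t$ gives
$$
r\{a_{k-1}x\}\leq\sum_{s\geq1}\frac{c_{k+s}}{b_{k+1}\cdots b_{k+s}}.
$$
Let $s_0$ be the first index with $c_{k+s_0}\neq0$. Every later term is bounded by $\frac{1}{b_{k+1}\cdots b_{k+s_0-1}}$ times the tail of \eqref{eqsum}. The leading term is at most $c_{k+s_0}/b_{k+s_0}<\varepsilon/4$, and the tail after it is at most $\frac{1}{b_{k+s_0}}\leq c_{k+s_0}/b_{k+s_0}<\varepsilon/4$ because $c_{k+s_0}\geq1$. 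Altogether this gives $r\{a_{k-1}x\}<\varepsilon$, and since this is less than $1$, \eqref{eqr} yields $\{e_jx\}<\varepsilon$.

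Finally, $d(B)=0$ by the hypothesis $d(L(\mathrm{supp}(x)))=0$, because adding a finite set does not change density. This proves $x\in t^s_{(e_n)}(\T)$. The points to check carefully are the indexing convention in Remark \ref{uconr1}, namely which block corresponds to which $k$, and the tail bound in the final estimate.
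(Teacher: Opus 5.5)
Your final argument is correct and is essentially the paper's proof: you discard $L(\mathrm{supp}(x))$ together with a finite initial segment. For $j\in[n_{k-1},n_k-1]$ with $c_k=0$, you bound $r^i_k\{a_{k-1}x\}$ by expanding up to the next support index. Your estimate $1/b_{k+s_0}\leq c_{k+s_0}/b_{k+s_0}$ plays the role of the paper's remark that $\mathrm{supp}(x)$ is $b$-divergent; the paper phrases that step as $\{a_kx\}\to 0$ followed by $\{a_{k-1}x\}=\{a_kx\}/b_k$.

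The earlier detour through $L(S-1)$ is unnecessary and mislabeled. The blocks $[n_k,n_{k+1}-1]$ with $k+1\in S$ lie in $L(S)$, not $L(S-1)$. Your final choice $B=L(S)\cup[1,n_N]$ is the right one.
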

\begin{proof}
 Note that $x \in t^{s}_{(e_n)}(\mathbb{T})$ holds vacuously whenever $supp(x)$ is finite. Thus, assume that $x\in\T$ such that $supp(x)$ is infinite and satisfies the aforesaid properties. Now, the condition $\displaystyle\lim_{n\in supp(x)}\frac{c_n}{b_n}=0$ yields that $supp(x)$ is $b$-divergent. Choose an arbitrary $j\in \mathbb{N}\setminus L(supp (x))$. Then there exists $k\in\mathbb{N}\setminus\supp(x)$ such that $j=n_{k-1}+i-1$ for some $i\in[1,n_{k}-n_{k-1}]$. With this, we can write
 $$
 e_jx=e_{(n_{k-1}+i-1)}x=r^i_{k}a_kx~\mbox{ for some ~$1\leq r^i_{k}\leq b_k$}.
 $$
By Eq. (\ref{eqlemain1}), for all $t\geq 0$, we get
$$
\{a_{k}x\}=\frac{c_{k+1}}{b_{k+1}}+\frac{c_{k+2}}{b_{k+1}b_{k+2}}+\ldots+\frac{c_{k+t+1}}{b_{k+1}\ldots b_{k+t+1}}+\frac{\{a_{k+t+1}x\}}{b_{k+1}\ldots b_{k+t+1}}.
$$
For each $k\in\N$, let $l_k$ denote the smallest positive integer such that $k+l_k\in supp(x)$.
Consequently, we obtain
$$
\{a_{k}x\}=\frac{c_{k+l_k}}{b_{k+1}\ldots b_{k+l_k}}+\frac{\{a_{k+l_k}x\}}{b_{k+1}\ldots b_{k+l_k}} \leq \frac{c_{k+l_k}+1}{b_{k+1}\ldots b_{k+l_k}}.
$$
Since $supp(x)$ is $b$-divergent with $\displaystyle\lim_{n\to\infty}\frac{c_n}{b_n}=0$ and $k+l_k\in supp(x)$, it follows that
$$
\displaystyle\lim_{k\to\infty}\{a_{k}x\}=0.
$$
Again, by Eq. (\ref{eqlemain1}) with $t=0$, we get that
 \begin{align}
     \{a_{k-1}x\}=&\left(\frac{c_k}{b_k}+\frac{\{a_kx\}}{b_k}\right)\nonumber\\
     =~&\frac{\{a_kx\}}{b_k}~\mbox{ (since $c_k=0$)}\label{eqnlemma1}.
 \end{align}
 Moreover, since $1\leq r^i_{k}<b_k$ for each $k\in\mathbb{N}$, and
 $\displaystyle\lim_{k\to\infty}\{a_{k}x\}=0$, Eq (\ref{eqnlemma1}) ensures that
 \begin{equation*}
   \displaystyle\lim_{\begin{smallmatrix} k \to \infty & \\k\notin supp(x) \end{smallmatrix}}r^i_{k}\{a_{k-1}x\}=\displaystyle\lim_{\begin{smallmatrix} k \to \infty & \\k\notin supp(x) \end{smallmatrix}}\frac{r^i_{k}}{b_k}\{a_{k}x\}=0.
 \end{equation*}
 Now, Eq. (\ref{eqr}) entails that
 \begin{align*}
      &\displaystyle\lim_{\begin{smallmatrix} k \to \infty & \\k\notin supp(x) \end{smallmatrix}}\{r^i_{k}a_{k-1}x\}=0\\
\Rightarrow    &\displaystyle\lim_{\begin{smallmatrix} k \to \infty & \\k\notin supp(x) \end{smallmatrix}}\{e_{(n_{k-1}+i-1)}x\}=0\\
   \Rightarrow &\displaystyle\lim_{\begin{smallmatrix} j \to \infty & \\j\notin L(supp(x)) \end{smallmatrix}}\{e_jx\}=0.
 \end{align*}
 Since $d(L(supp(x)))=0$, we deduce that $x\in t_{(e_n)}^s(\mathbb{T})$.
 \end{proof}

 \begin{lemma}\label{divergentliftingzero}
   Let $(e_n)$ be an arithmetic-type sequence associated with the arithmetic sequence $(a_n)$. If there exists a $b$-divergent set $D$ such that $d(L(D))=0$ then $|t^{s}_{(e_n)}(\mathbb{T})|=\mathfrak{c}$.
 \end{lemma}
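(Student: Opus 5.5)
The plan is to deduce the claim from Lemma \ref{lemmasupport}. That lemma gives membership in $t^{s}_{(e_n)}(\T)$ for every $x$ with $\lim_{n\to\infty} c_n/b_n=0$ and $d(L(supp(x)))=0$. So it suffices to produce $\mathfrak c$ many such points, all with supports inside the given $b$-divergent set $D$ with $d(L(D))=0$.

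First, note that any subset of a $b$-divergent set is again $b$-divergent. Also, $L$ is monotone with respect to inclusion: if $S\subseteq D$ then $L(S)\subseteq L(D)$, directly from Definition \ref{ucondeff2}. Hence $d(L(S))=0$ for every $S\subseteq D$. We may assume $D$ is infinite, which is implicit in being $b$-divergent in a nontrivial sense. Since $b_n\to\infty$ along $D$, discarding finitely many elements lets us also assume $b_n\geq 2$ for $n\in D$; in fact $b_n\ge 2$ always holds for an arithmetic sequence.

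Second, for each infinite $S\subseteq D$, define $x_S$ by the canonical representation with $c_n=1$ for $n\in S$ and $c_n=0$ otherwise.
\begin{itemize}
\item This is a legitimate canonical representation as in Fact \ref{lemmanew}: we have $c_n\in[0,b_n-1]$, and $c_n<b_n-1$ for infinitely many $n$. The latter holds because $c_n=0<b_n-1$ off $S$, or alternatively because $b_n>2$ eventually on $D$.
\item Then $supp(x_S)=S$.
\item We have $c_n/b_n\to 0$: the ratio is $0$ off $S$ and $1/b_n\to0$ along $S\subseteq D$ by $b$-divergence.
\item Finally, $d(L(supp(x_S)))=d(L(S))=0$.
\end{itemize}
Lemma \ref{lemmasupport} then yields $x_S\in t^{s}_{(e_n)}(\T)$.

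Third, by uniqueness of canonical representations, distinct $S$ give distinct $x_S$. An infinite set $D$ has $\mathfrak c$ many infinite subsets, so $|t^{s}_{(e_n)}(\T)|\geq\mathfrak c$. The reverse inequality is trivial, since the subgroup lies in $\T$.

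There is no serious obstacle here. The only point needing care is checking that the chosen digit sequences are admissible canonical representations, so that $supp(x_S)=S$ exactly and the hypotheses of Lemma \ref{lemmasupport} are read off correctly.
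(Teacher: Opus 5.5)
Your proposal is correct and follows the same route as the paper. Both arguments choose points $x$ with $supp(x)\subseteq D$ and $c_n/b_n\to 0$, note that $L(supp(x))\subseteq L(D)$ forces $d(L(supp(x)))=0$, apply Lemma \ref{lemmasupport}, and count $\mathfrak c$ such points. Your explicit choice $c_n=1$ on infinite $S\subseteq D$ just spells out the admissibility of the canonical representation and the cardinality count, which the paper leaves implicit.
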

\begin{proof}
Let $D\subset\mathbb{N}$ be a $b$-divergent set such that $d(L(D))=0$. Choose any $x\in\mathbb{T}$ such that

\begin{equation}\label{eqndivlifzero}
  supp(x)\subseteq D~\mbox{ and }~\lim_{n\in D}\frac{c_n}{b_n}=0.
\end{equation}
Observe that $d(L(supp(x)))=0$ and $\displaystyle\lim_{n\in supp(x)}\frac{c_n}{b_n}=0$. Then, by Lemma \ref{lemmasupport}, we obtain $x\in t^{s}_{(e_n)}(\mathbb{T})$.
Since there are $\c$ many $x\in\T$ satisfying Eq. (\ref{eqndivlifzero}), we can conclude that $|t^{s}_{(e_n)}(\mathbb{T})|=\mathfrak{c}$.
\end{proof}
\begin{lemma}\label{divergentliftinggrtzero}
Let $(e_n)$ be an arithmetic-type sequence associated with the arithmetic sequence $(a_n)$ with $e_{n_k}=a_k$ for each $k\in\N\cup\{0\}$. If there exists an infinite $B\subseteq\N$ such that $n_k=2n_{k-1}-n_0$ for each $k\in B$ then for each infinite $A\subseteq B$, $\bar{d}(L(A))>0$.
\end{lemma}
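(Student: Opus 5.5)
Fix an infinite $A\subseteq B$. For each $k\in A$ the lifting of $\{k\}$ is the block $L(\{k\})=[n_{k-1},n_k-1]$, which has cardinality $n_k-n_{k-1}$. Since $k\in B$, this equals $n_{k-1}-n_0$. The plan is to evaluate the counting function of $L(A)$ only at the right endpoints $n_k-1$, $k\in A$. Because $A$ is infinite, these endpoints give infinitely many $n$, and $\bar{d}(L(A))$ is at least the $\limsup$ along them.

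For $k\in A$ the whole block $[n_{k-1},n_k-1]$ lies in $L(A)\cap[0,n_k-1]$, so
\begin{equation*}
\frac{|L(A)\cap[0,n_k-1]|}{n_k}\ \geq\ \frac{n_k-n_{k-1}}{n_k}.
\end{equation*}
Using $n_k=2n_{k-1}-n_0$, we have $n_{k-1}=(n_k+n_0)/2$. Hence
\begin{equation*}
n_k-n_{k-1}=\frac{n_k-n_0}{2},
\end{equation*}
and the ratio above equals $\frac{1}{2}-\frac{n_0}{2n_k}$. Since $(n_k)$ is strictly increasing, $n_k\to\infty$ along $A$, so this ratio tends to $\frac{1}{2}$. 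Therefore $\bar{d}(L(A))\geq\frac12>0$.

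The only point needing care is bookkeeping, not substance. The density in the paper is normalised over $[0,n-1]$, so taking $n=n_k$ makes the interval $[0,n_k-1]$ contain the full block, which is why $n_k$ is the right choice of $n$. The constant $n_0$ (equal to $1$ in the paper's examples) is fixed, so it disappears in the limit. The argument never uses the internal structure of the $e_n$; it uses only that each block has length equal to everything before it, and this is exactly the hypothesis $n_k=2n_{k-1}-n_0$.
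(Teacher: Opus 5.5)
Your proposal is correct and follows essentially the same approach as the paper: evaluate the counting function of $L(A)$ at the right endpoints $n_k-1$ of the blocks indexed by $k\in A$, keep only the final block $[n_{k-1},n_k-1]$, whose length is $n_{k-1}-n_0=(n_k-n_0)/2$, and obtain $\bar{d}(L(A))\geq\frac12$. The paper first sums all the blocks up to that endpoint and then discards all but the last one, so its computation is just a slightly longer version of yours.
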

\begin{proof}
Let us write $A=\{u_j+1:j\in\N\}$. Therefore, observe that
 \begin{eqnarray*}
 \bar{d}(L(A)) &=& \limsup_{n\to\infty}\frac{|L(A)\cap[1,n]|}{n} \\
 &\geq& \limsup_{j\to\infty}\frac{|L(A)\cap[1,n_{u_j+1}-1]|}{n_{u_j+1}-1} \\
&=& \limsup_{j\to\infty}\frac{|\displaystyle\bigcup_{k\in\N}[n_{u_k},n_{{u_k}+1}-1]\cap[1,n_{u_j+1}-1]|}{n_{u_j+1}-1} \\
 &=& \limsup_{j\to\infty}\frac{\sum\limits_{k=1}^{j}|[n_{u_k},n_{{u_k}+1}-1]|}{n_{u_j+1}-1}\\
 &=& \limsup_{j\to\infty}\frac{\sum\limits_{k=1}^{j}(n_{{u_k}+1}-n_{u_k})}{n_{u_j+1}-1}\\
 &\geq& \limsup_{j\to\infty} \frac{n_{{u_j}+1}-n_{u_j}}{(n_{u_j+1}-n_{u_j})+(n_{u_j}-n_0)}\\
&=& \limsup_{j\to\infty}\frac{n_{{u_j}+1}-n_{u_j}}{2(n_{u_j+1}-n_{u_j})}=\frac{1}{2} > 0,
\end{eqnarray*}
since $u_j+1\in A\subseteq B$ and $\ n_{u_j+1}=2n_{u_j}-n_0$ for each $j\in \N$.
\end{proof}
\begin{theorem}\label{thmdifferencec}
There exists an arithmetic-type sequence $(e_n)$ which is strongly non $L$-invariant and not weakly $L$-invariant but $|t^{s}_{(e_n)}(\mathbb{T})\setminus t_{(e_n)}(\mathbb{T})|=\mathfrak{c}$.
\end{theorem}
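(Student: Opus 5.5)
We construct the example explicitly. The idea is to make the blocks $[n_{k-1},n_k-1]$ double in length, so that Lemma \ref{divergentliftinggrtzero} forces every infinite $A$ to have $\bar d(L(A))>0$. At the same time we keep the multipliers $r_k^i$ inside each block \emph{even} (apart from $r_k^1=1$), so that points $x$ with $\{a_{k-1}x\}\approx \tfrac12$ misbehave only at the single index $n_{k-1}$ of each such block.

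\textbf{Construction.} Fix $n_0=1$ and let $n_k=2n_{k-1}-n_0$, so that $N_k:=n_k-n_{k-1}=2^{k-1}$. Choose even ratios $b_k$ growing fast, for instance $b_k\ge 2^{2k+4}$, so that $2N_k<b_k$ with plenty of room, and let $(a_k)$ be the resulting arithmetic sequence. In block $k$ take the multipliers
$$r_k^1=1,\qquad r_k^i=2(i-1)\quad\text{for } 2\le i\le N_k .$$
These lie in $[1,b_k-1]$ and are increasing. The resulting $(e_n)$ is increasing, since the last element of block $k$ is $2(N_k-1)a_{k-1}<b_ka_{k-1}=a_k$, and it satisfies (\ref{seqarithtype}).

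\textbf{Structural properties.} By Lemma \ref{divergentliftinggrtzero} with $B=\N$, every infinite $A$ has $\bar d(L(A))>0$. In particular $(e_n)$ is strongly non $L$-invariant, since this holds for every $b$-divergent $A$. It is also not weakly $L$-invariant, because already the case $m=0$ gives $d(L(A))\neq0$ for every infinite $A$.

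\textbf{The elements of $t^s_{(e_n)}(\T)\setminus t_{(e_n)}(\T)$.} Let $S\subseteq 3\N$ be infinite and define $x$ by $c_{k}=b_{k}/2$ for $k\in S$ and $c_k=0$ otherwise. There are $\mathfrak c$ such $S$, and different $S$ give different $x$ by uniqueness of the canonical representation (Fact \ref{lemmanew}). We estimate $\{a_{k-1}x\}$ by Lemma \ref{uconlmain}.
\begin{itemize}
\item If $k\in S$, then $k+1,k+2\notin S$, and $\{a_{k-1}x\}=\tfrac12+\delta_k$ with $0\le\delta_k\le 1/(b_kb_{k+1}b_{k+2})$, using (\ref{eqsum}).
\item If $k\notin S$, then $\{a_{k-1}x\}\le (c_{k+l}+1)/(b_k\cdots b_{k+l})\le 1/b_k$, where $k+l$ is the next element of $S$.
\end{itemize}
Now consider the index $j=n_{k-1}+i-1$, for which $e_jx=r_k^ia_{k-1}x$.
\begin{itemize}
\item For $k\notin S$ we have $r_k^i\{a_{k-1}x\}\le 2N_k/b_k\to0$, and (\ref{eqr}) gives $\{e_jx\}\to0$.
\item For $k\in S$ and $i\ge2$, the multiplier $r=2(i-1)$ is even. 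Hence $\|e_jx\|=\|r/2+r\delta_k\|=\|r\delta_k\|\le 2N_k/(b_kb_{k+1}b_{k+2})\to0$, using Fact \ref{eqInorm}.
\item For $k\in S$ and $i=1$ we get $\|a_{k-1}x\|\to\tfrac12$.
\end{itemize}
So $\|e_jx\|\to0$ outside the set $\{n_{k-1}:k\in S\}$, and $\|e_jx\|\not\to0$ along it. This set is contained in $\{2^{k-1}\}$ and so has density zero, which gives $x\in t^s_{(e_n)}(\T)\setminus t_{(e_n)}(\T)$. Hence $|t^s_{(e_n)}(\T)\setminus t_{(e_n)}(\T)|=\mathfrak c$.

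The main obstacle is precisely this compatibility: the block structure must satisfy $n_k=2n_{k-1}-n_0$, which kills weak $L$-invariance, while the bad behaviour of $x$ must be confined to a density-zero set. The parity trick, with even multipliers and $c_k=b_k/2$ on a sparse $S$, is what achieves this. The delicate points are the error bounds on $\delta_k$ and on $\{a_{k-1}x\}$ for $k\notin S$; this is why $S$ is spaced out, so that neighbouring digits do not interfere, and why $b_k$ must dominate $N_k=2^{k-1}$.
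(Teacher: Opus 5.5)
Your argument is correct. It shares the paper's mechanism: doubling blocks so that Lemma \ref{divergentliftinggrtzero} rules out weak $L$-invariance, even multipliers, and digits $c_k=b_k/2$ on a sparse support so that only the indices $n_{k-1}$ misbehave. It departs from the paper at two points.

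\emph{How non-membership in $t_{(e_n)}(\T)$ is obtained.} The paper takes $b_n=2n$, asserts $n_k-n_{k-1}=b_k/2$, and argues indirectly. It cites \cite[Corollary 3.8]{GD2} to make $t_{(e_n)}(\T)$ countable, so it only needs $|t^s_{(e_n)}(\T)|=\mathfrak c$. You instead get non-membership directly from $\|e_{n_{k-1}}x\|=\|a_{k-1}x\|\to\frac12$ along $k\in S$, which is self-contained.

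\emph{The growth of $b_k$.} Your condition $b_k\gg N_k$ is doing real work. With $b_n=2n$, the admissible multipliers in block $k$ are only $1,2,4,\dots,2k-2$, so $n_k-n_{k-1}\le k$. That is incompatible with the doubling $n_k-n_{k-1}=n_{k-1}-n_0$. A consistent version of the paper's route needs $2(n_k-n_{k-1})\le b_k=O(n_k-n_{k-1})$, i.e.\ exponentially growing $b_k$.

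\emph{What each route buys.} The paper's route, once repaired, gives the sharper contrast ``$t_{(e_n)}(\T)$ countable, $t^s_{(e_n)}(\T)$ of size $\mathfrak c$''. In your example $t_{(e_n)}(\T)$ is itself uncountable. It contains every $x$ with all digits in $\{0,1\}$, since then $r_k^i\{a_{k-1}x\}<4N_k/b_k\to0$. So you prove exactly the statement and no more. The same domination $b_k\gg N_k$ also makes your spacing $S\subseteq 3\N$ unnecessary: for any infinite $S$ one has $r\delta_k<2N_k/b_k\to0$.

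One small slip: with $n_0=1$ the recursion $n_k=2n_{k-1}-n_0$ forces $n_1=n_0$. Moreover, $N_k=2^{k-1}$ really means $n_k=2^k=2n_{k-1}$, so Lemma \ref{divergentliftinggrtzero} does not apply verbatim. You have two easy fixes:
\begin{itemize}
\item Start the recursion at $k=2$ with $n_1=2$. Then $N_k\le 2^{k-1}$ and all your bounds survive.
\item Argue directly: $N_k/(n_k-1)=2^{k-1}/(2^k-1)\to\frac12$, which gives $\bar d(L(A))\ge\frac12$ for every infinite $A$.
\end{itemize}
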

\begin{proof}
 Let us set $b_n=2n$ for each $n\in \N$. Now, consider the associated arithmetic sequence $(a_n)$, defined by
 $$
 a_0=1~\mbox{ and }~a_{n}=b_{n}a_{n-1}~\mbox{ for each }n\in\N.
 $$
 We construct an arithmetic-type sequence $(e_n)$ corresponding to $(a_n)$ with $e_{n_k}=a_k$ such that
 \begin{equation}\label{eqndefinenk}
   n_k=2n_{k-1}-n_0~\mbox{ for each } k\in\N.
 \end{equation}
 Moreover, for each $i\in[1,n_k-n_{k-1}]$, we define
 \begin{equation}\label{eqndefineen}
     e_{(n_{k-1}+i-1)}=r_k^{i}a_{k-1}~\mbox{ where }r_k^{i}\in(\{1\}\cup\{2j:j\in\N\})\cap [1,b_k-1].
 \end{equation}
 Note that, in view of Lemma \ref{divergentliftinggrtzero} and Eq. (\ref{eqndefinenk}), we deduce that $(e_n)$ is strongly non $L$-invariant and not weakly $L$-invariant. Furthermore, Eq. (\ref{eqndefineen}) entails that
$$
n_k-n_{k-1}=\frac{b_k}{2}~\mbox{ for each } k\in\N.
$$
 Thus, $\frac{b_k}{n_k-n_{k-1}}=2$, which is bounded. Consequently, by \cite[Corollary 3.8]{GD2}, it follows that $t_{(e_n)}(\mathbb{T})$ is countable.

 Next, we show that $|t^s_{(e_n)}(\mathbb{T})|=\c$. Let $A=\{l_1<l_2<...<l_k<...\}\subset\N$ be such that $|l_{k+1}-l_{k}|\to\infty$ as $k\to\infty$.
 Choose  $x\in\T$ with $supp(x)$ being infinite such that
\begin{equation}\label{eqnformingelements}
 supp(x)\subseteq A~\mbox{ and }~c_n=\frac{b_n}{2}~\mbox{ for each}~n\in supp(x).
\end{equation}
Since $n_k-n_{k-1}=n_{k-1}-n_0\to\infty$, we have $d(\{n_k:k\in\N\})=0$.
Note that, for each
 $j\in\N\setminus \{n_{k}:k\in\N\}$, we have $j=n_{k-1}+i-1$ for some $k\in\N$ and some $2\leq i\leq n_k-n_{k-1}$. Then, by the way $(e_n)$ is constructed, we can write
 $$
 e_{j}x=e_{(n_{k-1}+i-1)}x=r^i_{k}a_{k-1}x~\mbox{ where }r^i_{k}\in2\N\cap[1,b_k-1].
 $$
 Note that, for each $k\in \N$, either $k\in supp(x)$ or there exists $m\geq 2$ such that $k+m\in\supp(x)$. If $k\in supp(x)$, then by Eq. (\ref{eqsum}) and Eq. (\ref{eqlemain1}) we obtain
 \begin{align}\label{eqn18}
    &\frac{c_k}{b_k}\leq \{a_{k-1}x\}\leq \frac{c_k}{b_k}+\frac{c_{k+m}+1}{b_kb_{k+1}\cdots b_{k+m}}\nonumber\\
    \Rightarrow~& \frac{1}{2}\leq \{a_{k-1}x\}\leq \frac{1}{2}+\frac{1}{b_kb_{k+1}\cdots b_{k+m-1}}~\mbox{ (since }c_i\leq b_i-1~\mbox{for each }i\in\N)\nonumber\\
    \Rightarrow~& \frac{r^i_{k}}{2}\leq r^i_{k}\{a_{k-1}x\}\leq \frac{r^i_{k}}{2}+\frac{r^i_{k}}{b_k}\cdot\frac{1}{b_{k+1}\cdots b_{k+m-1}}\nonumber\\
    \Rightarrow~& \frac{r^i_{k}}{2}\leq r^i_{k}\{a_{k-1}x\}\leq \frac{r^i_{k}}{2}+\frac{1}{b_{k+1}\cdots b_{k+m-1}}~\mbox{(since }~1\leq r^i_{k}<b_k).
 \end{align}
 If $k\notin supp(x)$, then again using Eq. (\ref{eqsum}) and Eq. (\ref{eqlemain1}), we can write
 \begin{align}\label{eqn19}
    &\{a_{k-1}x\}\leq \frac{c_{k+m}+1}{b_kb_{k+1}\cdots b_{k+m}}\nonumber\\
    \Rightarrow~& r^i_{k}\{a_{k-1}x\}\leq \frac{r^i_{k}}{b_k}\cdot\frac{1}{b_{k+1}\cdots b_{k+m-1}}\nonumber\\
    \Rightarrow~& r^i_{k}\{a_{k-1}x\}\leq \frac{1}{b_{k+1}\cdots b_{k+m-1}}.
\end{align}
Let $\epsilon>0$ be arbitrary.
 Since $(a_n)$ is $b$-divergent and $r^i_{k}\in 2\N$, it follows that for sufficiently large $k\in \N$, by Eq. (\ref{eqn18}) and Eq. (\ref{eqn19}), we obtain
 $$
 \|r^i_{k}\{a_{k-1}x\}\|<\epsilon.
 $$
Consequently, there exists $j_{\varepsilon}\in\N$  such that for all $j\in\N\setminus \{n_{k}:k\in\N\}$ with $j\geq j_{\varepsilon}$, we have
$$
\|e_jx\|=\|e_{(n_{k-1}+i-1)}x\|=\|r^i_{k}a_{k-1}x\|= \|r^i_{k}\{a_{k-1}x\}\|<\epsilon.
$$
Since $d(\{n_{k}:k\in\N\})=0$, it follows that $x\in t^s_{(e_n)}(\mathbb{T})$.
Observe that there are $\c$ many elements $x\in\T$ satisfying Eq. (\ref{eqnformingelements}). This ensures that  $|t^s_{(e_n)}(\mathbb{T})|=\c$. Since $t_{(e_n)}(\mathbb{T})$ is countable, we can conclude that $|t^{s}_{(e_n)}(\mathbb{T})\setminus t_{(e_n)}(\mathbb{T})|=\mathfrak{c}$.
\end{proof}
From Theorem \ref{thmdifferencec}, it is natural to pose the following question:
\begin{problem}\label{pr1}
 Under what conditions does $t^s_{(e_n)}(\mathbb{T})=t_{(e_n)}(\mathbb{T})$ hold for arithmetic-type sequences $(e_n)$?
\end{problem}
In contrast to the case of $(d_n)$, the subgroup $t_{(e_n)}(\mathbb{T})$ need not be countable for arithmetic-type sequences $(e_n)$.
Consequently, even a positive solution to Problem \ref{pr1} does not imply that $t^s_{(e_n)}(\mathbb{T})$ is countable. So we formulate the following problem:
\begin{problem}
 When  $t^s_{(e_n)}(\mathbb{T})$ is countable?
\end{problem}


\begin{thebibliography}{100}

\bibitem{A1} J. Arbault, Sur l'ensemble de convergence absolue d'une s\' erie trigonom\' etrique., Bull. Soc. Math. Fr. 80 (1952),  253--317.



\bibitem{AT} A. Arhangel’skii , M. Tkachenko, Topological Groups and Related Structures: An Introduction to Topological Algebra, Atlantis Press, Paris, 2008.

\bibitem{AD} L. Außenhofer, D. Dikranjan, Locally quasi-convex compatible topologies on locally compact abelian groups, Mathematische Zeitschrift, 296 (2020), 325-351.


\bibitem{BDBW} G. Barbieri, D. Dikranjan, A. Giordano Bruno, H. Weber, Dirichlet sets vs characterized subgroups, Topol. Appl. 231, 50--76 (2017).

\bibitem{BDK}   M. Balcerzak, K. Dems, A. Komisarski, Statistical convergence and ideal convergence for sequences of functions, J. Math. Anal. Appl., 328(1) (2007), 715--729.


\bibitem{BDMW1} G. Babieri, D. Dikranjan,  C, Milan, H. Weber, Topological torsion related to some recursive sequences of integers, Math. Nachr. 281(7) (2008), 930--950.




\bibitem{BDS} A. B\' \i r\' o, J.M. Deshouillers, V.T.  S\' os, Good approximation and characterization of subgroups of $\R/\Z$, Studia Sci. Math. Hungar. 38 (2001), 97--113.

\bibitem{B1} J.-P. Borel,  Sous-groupes de $\R$ li\' es \' a  la r\' epartition modulo 1 de suites, Ann. Fac. Sci. Toulouse Math. 5(3--4) (1983), 217--235.

\bibitem{B2} J.-P. Borel,  Sur certains sous-groupes de R li\' es \' a la suite des factorielles, Colloq. Math. 62(1) (1991), 21--30.


\bibitem{Bu1} R. C. Buck, The measure theoretic approach to density, Amer. J. Math. 68 (1946), 560--580.



\bibitem{BuKR} L. Bukovsk\' y,  N. Kholshchevnikova, N.N., M. Repick\'  y, Thin sets in harmonic analysis and infinite combinatorics, Real Anal. Exch. 20 (1994/1995), 454--509.

\bibitem{CKG} J. Connor, V. Kadets, M. Ganichev,  A Characterization of Banach Spaces with Separable Duals via Weak Statistical Convergence, J. Math. Anal. Appl.,  244 (2000), 251--261.

\bibitem{DG2} P. Das, A. Ghosh, Solution of a general version of Armacost's problem on topologically torsion elements, Acta Math. Hungar., 164(1) (2021), 243--264.

\bibitem{DG} P. Das, A. Ghosh, On a new class of trigonometric thin sets extending Arbault sets, Bul. Sci. Math., 179 (2022), 103157.

\bibitem{DG8} P. Das, A. Ghosh, Characterized subgroups related to some non-arithmetic sequence of integers, Mediterr. J. Math., 21 (2024), 164.

\bibitem{DG9} P. Das, A. Ghosh, Statistically characterized subgroups related to some non-arithmetic sequence of integers,  Expo. Math., 43 (2025), 125653.

\bibitem{DGT1}  P. Das, A. Ghosh, T. Aziz, Topologically torsion elements of the circle group for arithmetic-type sequences, Bul. Sci. Math., 199 (2025) 103580.

\bibitem{DGT2}  P. Das, A. Ghosh, T. Aziz, Statistically characterized subgroups related to some non-arithmetic sequence of integers II (characterization for countable subgroups), J. Lond. Math. Soc., 112-6 (2025), e70365.

\bibitem{D_NYC} D. Dikranjan, Topologically torsion elements of topological groups, Topol. Proc. 26 (2001--2002), 505--532.


\bibitem{DDB} D. Dikranjan, P. Das, K. Bose, Statistically characterized subgroups of the circle, Fund. Math., 249 (2020), 185-209.



\bibitem{DI1} D. Dikranjan and D. Impieri, Topologically torsion elements of the circle group, Commun. Algebra, 42 (2014),  600--614.



\bibitem{DK}   D. Dikranjan, K. Kunen, Characterizing countable subgroups of compact abelian groups, J. Pure Appl. Algebra 208 (2007), 285--291.


\bibitem{DPS} D. Dikranjan, Iv. Prodanov and L. Stoyanov, Topological Groups: Characters, Dualities and Minimal Group Topologies, Pure and Applied Mathematics, Marcel Dekker Inc. New York (1989).

\bibitem{DR} D. Dikranjan, R. Di Santo, Answer to Armacost's quest on topologically torsion elements of the circle group, Comm. Algebra, 32 (2004), 133--146 .

\bibitem{DDBH} D. Dikranjan, R. Di Santo, A. Giordano Bruno and H. Weber, An introduction to $\mathcal{I}$-characterized subgroups of the circle, Topology Appl., 2025, to appear.

\bibitem{MK}  G. Di Maio, Lj.D.R. Ko$\check{c}$inac, Statistical convergence in topology, Topology Appl. 156 (2008), 28--45.


\bibitem{El}  P. Elia\v s, A classification of trigonometrical thin sets and their interrelations, Proc. Amer. Math. Soc. 125(4) (1997), 1111--1121.

\bibitem{F} H. Fast, Sur la convergence statistique, Colloq. Math. 2 (1951), 241--244.

\bibitem{FGT} A. Faisant, G. Grekos, V. Toma, On the statistical variation of sequences, J. Math. Anal. Appl., 306 (2) (2005), 432--439.

\bibitem{Fr} J. A. Fridy, On statistical convergence, Analysis 5(4) (1985), 301--313.



\bibitem{GD1} A. Ghosh, P. Das, Characterizing infinite torsion subgroups of the circle via arithmetic-type sequences, Bull. Lond. Math. Soc. 57(12) (2025), 3824--3840.

\bibitem{GD2} A. Ghosh, P. Das, Characterizing infinite torsion subgroups of the circle via arithmetic-type sequences II, submitted.



\bibitem{I} D. Impieri, Characterized subgroups, PhD Thesis, July 2015.

\bibitem{Ka} S. Kahane, Antistable classes of thin sets in harmonic analysis, Illinois J. Math. 37 (1993), 186-223.






\bibitem{R} L. Robertson, Connectivity, divisibility, and torsion, Trans. Amer. Math. Soc. 128 (1967) 482--505.

\bibitem{S} T. \v Sal\' at, On statistically convergent sequences of real numbers, Mathematica Slovaca, 30(2) (1980), 139--150.


\bibitem{DGDis} R. Di Santo,  D. Dikranjan and A. Giordano Bruno, Characterized subgroups of the circle group, Ric. Mat. 67(2) (2018), 625--655.


\bibitem{St} H. Steinhaus, Sur la convergence ordinaire et la convergence asymptotique, Colloq. Math. 2 (1951), 73--74.

\bibitem{W} H. Weyl, \" Uber die Gleichverteilung von Zahlen mod. Eins., Math. Ann. 77(3) (1916), 313--352.

\bibitem{Wi} R. Winkler, Ergodic group rotations, Hartman sets and Kronecker sequences (dedicated to Edmund Hlawka on the occasion of his $85^{th}$ birthday), Monatsh. Math. 135, No.4, 333-343 (2002).

\bibitem{Z} A. Zygmund, Trigonometric Series, vols. I,II, Cambridge University Press, Cambridge, New York, Melbourne (1977).
\end{thebibliography}
\end{document}